\documentclass{aastex701}

\usepackage{defs,fonts}
\usepackage{amsfonts,amsmath,amssymb,amsthm,color}
\usepackage{caption,subcaption}
\usepackage{algorithm}
\usepackage{algpseudocode}

\usepackage{enumitem}
\newlist{steps}{enumerate}{1}
\setlist[steps, 1]{leftmargin = \leftmargin+1cm, rightmargin = \rightmargin+1cm, label = Step \arabic*:}

\usepackage{mathtools}

\DeclareMathOperator*{\argmin}{arg\,min}

\newcommand{\innerproduct}[2]{\Big\langle #1 \Big\rangle_{#2}}

\newtheorem{dfn}{Definition}

\newtheorem{rem}{Remark}
\newtheorem{thm}{Theorem}
\newtheorem{lem}{Lemma}

\usepackage{hyperref}
\hypersetup{
    colorlinks=true,
    linkcolor=blue,
    filecolor=magenta,      
    urlcolor=cyan,
    pdftitle={Overleaf Example},
    pdfpagemode=FullScreen,
    citecolor = blue,
    }

\begin{document}

\title{Mesh-Based Filtering to Alleviate Time-Step Restrictions in Runge--Kutta Discontinuous Galerkin Methods in Spherical-polar Coordinates: Application to the Euler Equations
\footnote{This manuscript has been authored in part by UT-Battelle, LLC, under contract DE-AC05-00OR22725 with the US Department of Energy (DOE). 
The US government retains and the publisher, by accepting the article for publication, acknowledges that the US government retains a nonexclusive, paid-up, irrevocable, worldwide license to publish or reproduce the published form of this manuscript, or allow others to do so, for US government purposes. 
DOE will provide public access to these results of federally sponsored research in accordance with the DOE Public Access Plan (http://energy.gov/downloads/doe-public-access-plan).}
}

\author[orcid=0009-0004-7188-1172,gname=Joseph,sname=Hunter]{Joseph Hunter}
\affiliation{Department of Mathematics, Ohio State University, Columbus, OH, 43210, USA}
\email[show]{hunter.926@buckeyemail.osu.edu}  

\author[orcid=0000-0003-1251-9507,gname=Eirik,sname=Endeve]{Eirik Endeve}
\affiliation{Computer Science and Mathematics Division, Oak Ridge National Laboratory, Oak Ridge, TN 37831, USA}
\affiliation{Department of Physics and Astronomy, University of Tennessee Knoxville, Knoxville, TN 37996, USA}
\email{endevee@ornl.gov}

\author[orcid=0000-0002-3504-6194,gname=Yulong,sname=Xing]{Yulong Xing}
\affiliation{Department of Mathematics, Ohio State University, Columbus, OH, 43210, USA}
\email{xing.205@osu.edu}

\begin{abstract}
    We propose a mesh-based filtering approach to alleviate the severe timestep restrictions arising in explicit Runge--Kutta discontinuous Galerkin (RKDG) methods formulated in spherical-polar coordinates.  
    The filter enables stable evolution on the original logically Cartesian mesh while using larger time steps associated with an auxiliary merged mesh constructed to eliminate the extreme cell anisotropies produced by converging coordinate lines near coordinate singularities.
    The filter is implemented as a sequence of post-processing operations applied within an $s$-stage RK time integrator, making it straightforward to incorporate into existing structured-mesh DG frameworks. 
    We analyze the filter in one spatial dimension and prove that the filtered RKDG method is equivalent to evolving the RKDG discretization on a nonuniform mesh obtained by merging selected elements of the underlying uniform mesh.  
    This equivalence implies that the filtered method inherits the accuracy and stability properties of the corresponding RKDG discretization on the merged mesh.  
    We apply the mesh-based filter to an existing RKDG method for the Euler equations in spherical-polar coordinates and demonstrate, through selected two- and three-dimensional examples, its effectiveness in accelerating simulations through significantly larger stable timesteps.
\end{abstract}

\keywords{\uat{Computational methods}{1965} --- \uat{Hydrodynamics}{1963} --- \uat{Hydrodynamical simulations}{767} --- \uat{Shocks}{2068}}

\section{Introduction}
\label{sec:intro}

In this paper we present a mesh-based filtering technique for alleviating the severe time step restrictions that arise when discontinuous Galerkin (DG) methods are applied on structured spherical-polar meshes with explicit time integration.  
Although our primary application is the nonrelativistic Euler equations, the filtering procedure is independent of the underlying system of partial differential equations (PDEs).  
The filter is implemented as a post-processing operation within a multi-stage time integrator, making it straightforward to incorporate into existing structured-mesh DG frameworks.  
Conceptually, the filter enables the solution to be evolved on the original spherical-polar mesh while employing time steps characteristic of an auxiliary mesh with reduced cell anisotropy.  
This is particularly advantageous for simulations of spherical objects, where converging coordinate lines produce highly anisotropic cells near the origin and poles that severely restrict the allowable time step.  
By mitigating these geometric time step constraints, the mesh-based filter can substantially reduce the computational cost of DG simulations in spherical-polar coordinates.  
The present work focuses on the numerical properties of the filtering strategy and its effectiveness in alleviating time step restrictions.  
Questions related to load balancing, communication in distributed parallel implementations, and scalability are left for future work.  

Spherical-polar coordinates $(r,\theta,\phi)$, where $r$ is radius, $\theta$ is polar angle, and $\phi$ is azimuthal angle, are widely used for simulations of nearly spherical objects such as stars, which are gravitationally stratified and often remain close to spherical equilibrium configurations \citep[e.g.,][]{chandrasekhar_1967,shapiroTeukolsky_1983}.  
Their principal advantage is that they naturally align with both the geometry and radial stratification of such objects.  
In particular, nonuniform radial spacing can efficiently resolve the multiscale structure of stellar interiors and atmospheres, often through logarithmic spacing at large radii \citep{Muller2020}.  
Furthermore, spherical or axial symmetry can be imposed to reduce the problem to one or two spatial dimensions, enabling computationally inexpensive studies that build intuition before undertaking fully three-dimensional simulations.
Cartesian coordinates $(x,y,z)$, by contrast, are straightforward to implement, avoid coordinate singularities, and yield uniform mesh aspect ratios.  
However, when applied to spherical objects they may introduce grid-aligned departures from spherical symmetry and can struggle to efficiently represent strongly stratified multiscale flows on a single-resolution mesh \citep{Muller2020}.  
Various techniques, including adaptive mesh refinement \citep{BERGER198964,Fryxell_2000,Ott_2012} and hybrid Cartesian--spherical mesh constructions \citep{Scheidegger_2010}, have been employed to mitigate these difficulties.  
Nevertheless, for many problems involving nearly spherical objects, spherical-polar coordinates remain a natural and efficient choice.  

While spherical-polar coordinates are a natural choice for simulating spherical objects, they do introduce two significant challenges.
The first challenge is the presence of coordinate singularities at the origin and along the poles \citep[e.g.,][]{monchmeyerMuller_1989,blondinLufkin_1993}.
Considering the line element
$
	ds^2 = dr^2 + r^2d\theta^2+r^2\sin^2\theta d\phi^2,
$
the proper lengths in the angular directions, $dl_{\theta}=rd\theta$ and $dl_{\phi}=r\sin\theta d\phi$, vanish as $r \to 0$, while $dl_{\phi}\to0$ as $\theta\to0$ and $\theta\to\pi$, implying that coordinate basis vectors are not uniquely defined at the origin and poles.  
Furthermore, differential operators may contain geometric factors such as $1/r^2$ and $1/\sin\theta$, whose cancellation relies on the regularity of the solution, and ensuring these cancellations at the discrete level can be challenging.  
Various meshes, including Yin--Yang meshes \citep{Kageyama_2004,Wongwathanarat_2010}, cubed-sphere meshes \citep{ronchi_etal_1996}, geodesic meshes \citep{florinski_etal_2013}, and hybrid Cartesian--cubed-sphere meshes \citep[e.g.,][]{ott_etal_2013} have been developed to mitigate these difficulties.
An alternative is the reference metric-approach \citep[see, e.g.,][and references therein]{Baumgarte_2013,Mewes_2020}, which recasts coordinate-dependent geometric terms using a prescribed reference metric, facilitating the extension of Cartesian formulations to spherical-polar coordinates, although maintaining strict conservation may require additional care.
Addressing discretization issues associated with coordinate singularities is not the primary focus of the present work.  

The second challenge, and the focus of this paper, is the severe time step restriction imposed by the Courant--Friedrichs--Lewy (CFL) condition for explicit time integration \citep{Courant_1928}.  
Because the proper angular cell widths are $\Delta l_{\theta}=r\Delta\theta$ and $\Delta l_{\phi}=r\sin\theta\Delta\phi$, the smallest cell dimensions occur near the origin and poles where coordinate lines converge.  
For a structured spherical-polar mesh, the maximum stable time step is therefore controlled by these shrinking proper lengths rather than by the radial spacing.  
Near the origin in two dimensions (2D), where $r\sim\Delta r$, the time step scales as $\Delta t\sim\Delta r \Delta \theta$.  
In three dimensions (3D), the additional factor $\Delta l_{\phi}=r\sin\theta\Delta\phi$ further restricts the time step near the poles, yielding $\Delta t\sim\Delta r \Delta\theta \Delta\phi$.
In contrast, the time step on a Cartesian mesh scales with the minimum Cartesian mesh spacing; $\Delta t\sim\min(\Delta x,\Delta y,\Delta z)$.  
Thus, the CFL restriction is typically not a consequence of physical resolution requirements, but rather of geometric over-resolution introduced by the convergence of coordinate lines near coordinate singularities, which produces highly anisotropic cells near the origin and poles and leads to excessively small time steps and increased computational cost. 
It is this geometric CFL restriction that the mesh-based filtering strategy developed in this work seeks to alleviate.  

Several approaches have been proposed to mitigate the severe CFL restrictions associated with spherical-polar meshes.  
One strategy is to evolve the governing equations directly on an unstructured, or ``dendritic,'' mesh, which is designed to avoid the extreme cell anisotropies produced by converging coordinate lines \citep{Skinner2019}.  
While effective, such approaches introduce additional discretization complexity associated with mesh-resolution transitions.  
Another strategy is Fourier filtering, in which high-frequency angular modes are selectively damped to stabilize time steps that exceed the nominal CFL limit.  
Fourier filtering has been applied in the $\phi$-direction \citep{Muller2019} and in both the $\theta$- and $\phi$-directions \citep{ji2023}.  
However, because Fourier filtering modifies the solution through a global spectral representation, it may introduce Gibbs oscillations in the presence of shocks.  

Of particular relevance to the present work are mesh-coarsening or cell-merging approaches developed in the finite-volume (FV) community \citep[e.g.,][]{Asaithambi2017,Muller2015,Nakamura2019,cardall_etal_2026}.  
These methods retain the original structured spherical-polar mesh while employing an auxiliary coarsened mesh to determine the allowable time step, thereby avoiding many of the implementation challenges associated with fully unstructured discretizations.  
With an appropriate choice of auxiliary mesh, the time step can scale as $\Delta t\sim\Delta r$, making the CFL constraint comparable to that of a Cartesian mesh.  
Conceptually, the procedure consists of restricting the solution from the original mesh to the auxiliary mesh and subsequently prolonging the solution back to the original mesh, while evolving on the fine mesh with a time step determined by the coarsened mesh.  
The mesh-based filtering strategy proposed in this work is directly inspired by these cell-merging approaches, but is developed within the context of high-order DG methods.  
As discussed by \citet{Muller2019}, maintaining both conservation and monotonicity during the prolongation step can be challenging in FV methods.  

In this paper, we develop a mesh-based filtering approach for Runge--Kutta DG (RKDG) methods \citep{Cockburn_1998,cockburnShu_2001}, inspired by cell-merging techniques from the FV literature, to alleviate the severe CFL restrictions associated with spherical-polar meshes. 
DG methods combine features of both spectral and FV methods.  
While FV methods evolve the cell average of the solution and reconstruct local polynomial representations from neighboring cell averages, DG methods evolve an element-local polynomial representation of the solution.  
Equivalently, FV methods evolve the zeroth moment of the solution, whereas DG methods evolve both the cell average and a finite number of higher-order moments.  
This fundamental difference necessitates a framework that extends the cell-merging idea to accommodate the restriction and prolongation of higher-order polynomial representations between the fine and auxiliary meshes.  
When constant basis functions are employed, the DG method reduces to an FV discretization and existing cell-merging approaches apply directly.  
For higher-order polynomial representations, however, a more general framework is required to achieve the same alleviation of the time step restriction.
We refer to this generalized DG formulation of the cell-merging concept as mesh-based filtering.  

The auxiliary meshes considered in this work are designed so that the proper angular lengths $\Delta l_{\theta}$ and $\Delta l_{\phi}$ remain comparable to the radial spacing $\Delta r$, with no merging  applied where $\Delta l_{\theta}$ and $\Delta l_{\phi}$ exceed $\Delta r$.  
Consequently, the mesh-based filter primarily targets geometric over-resolution rather than physically required resolution.  
We do not expect significant degradation of spatial accuracy.  
At worst, the filtered solution is expected to exhibit the accuracy of the corresponding auxiliary merged mesh, but the mesh-based filter can be easily adjusted to satisfy the angular resolution requirements of a given application.  

We focus on the DG method for several reasons.  
Unlike continuous finite element and spectral methods, DG methods do not enforce continuity of the solution across element boundaries, and they are therefore, like FV methods, suitable for capturing shocks and other discontinuities.  
DG methods also support $hp$-adaptivity \citep{Remacle_2003}, enabling mesh refinement and variation of the local polynomial degree in response to solution features.  
Furthermore, they are naturally applicable to problems in curvilinear coordinates \citep{TEUKOLSKY2016333} and complex geometries \citep{gulizzi_etal_2022}.  
By evolving the polynomial representation directly, DG methods eliminate the need for reconstruction and achieve high-order accuracy on compact stencils, with communication limited to nearest neighbors, independent of the polynomial degree.  
At higher order, they trade increased arithmetic intensity for reduced communication and memory-access costs \citep[e.g.,][]{millerSchnetter_2017}.  
We refer to \citet{shu_2016,gassnerWinters_2021} and references therein for overviews on recent developments of DG methods.  
More recently, DG methods have gained popularity in computational astrophysics \citep[e.g.,][]{Radice_2011,Schall_2015,KIDDER201784,Fambri_2018,Pochik_2021,dunham_etal_2024,hunter_etal_2025,endeve_etal_2026}, motivating the development of techniques such as mesh-based filtering to improve the efficiency of high-resolution DG-based astrophysics simulations on spherical-polar meshes.  

The paper is organized as follows.  
In Section~\ref{sec:1DCellMerging}, we introduce and analyze the mesh-based filtering approach in an idealized one-dimensional setting, establish theoretical results concerning its stability, and verify these results through one linear and one nonlinear numerical test.  
Section~\ref{sec:2D3Dmerging} extends the formulation to spherical-polar coordinates in two and three spatial dimensions.  
In Section~\ref{sec:Results}, we apply the method to the nonrelativistic Euler equations in two- and three-dimensional numerical examples demonstrating that accurate solutions can be obtained with significantly larger time steps.  
Finally, in Section~\ref{sec:Conclusion} we summarize our findings, draw conclusions, and discuss directions for future work.  

\section{Mesh-Based Filtering in One Dimension: Formulation, Analysis, and Validation}
\label{sec:1DCellMerging}

In this section we formulate the mesh-based filtering procedure in one
spatial dimension and analyze its relation to a DG discretization on an auxiliary mesh where some cells are merged.
We refer to this as a merged mesh.
The one-dimensional setting is used to present the transfer
operation between the fine and merged meshes and to carefully study and validate its effect. 

We consider the scalar conservation law
\begin{equation}
\label{eq:linearadvectionPDE}
    \p_{t}u(t,x) + \p_{x}f(u(t,x)) = 0,
    \quad
    t\in(0,T],
    \quad
    x\in\Omega,
\end{equation}
on the domain $\Omega = [a,b]$, with periodic boundary conditions and initial condition $u_0 = u(0,x)$.
Here, $u(t,x)$ is the time-dependent exact solution, and $f$ is a flux function.
We choose Equation~\eqref{eq:linearadvectionPDE} as our model for its simplicity, however the mesh-based filtering process we will describe in Sections \ref{sec:1DCellMerging} and \ref{sec:2D3Dmerging} can be applied to a broad class of models.

\subsection{Notation and Standard DG Method}
\label{subsec:StandardDG}
We partition the domain $\Omega = [a,b]$ into $N$ non-overlapping elements $I_j = [x_{j-1/2},x_{j+1/2}]$ for $j=1,\ldots,N$, such that $x_{1/2} = a$ and $x_{N+1/2} = b$.
We denote the length of an element by $|I_j| = x_{j+1/2}-x_{j-1/2}$, and write
\begin{equation}
    \cT=\{I_j\}_{j=1}^N,
    \qquad
    \overline\Omega=\bigcup_{j=1}^N I_j .
\end{equation}
We define the DG approximation space of piecewise polynomials up to degree $k$ as
\begin{equation}
    \bbV^k_h = \{\psi_h: \psi_h\rvert_{I_j}\in \bbP_k(I_j), \quad \forall I_j\in\cT\},
\end{equation}
where $\bbP_{k}(I_j)$ denotes the space of polynomials of degree less than or equal to $k$ on the element $I_{j}$.

The weak formulation of Equation~\eqref{eq:linearadvectionPDE} is obtained by multiplying the PDE by a test function $\psi_h\in\bbV^{k}_{h}$, integrating the PDE over an element $I_j$, and then performing integration by parts on the PDE's spatial derivative terms.
The semi-discrete DG method is defined as follows: find $u_h\in\bbV^{k}_{h}$ such that for all $\psi_{h}\in\bbV^{k}_{h}$,
\begin{equation}
\label{eq:1DSemidiscreteDG}
    \innerproduct{\p_{t}u_h(t,x),\psi_h(x)}{I_{j}} + \Big[\hat{f}\big(u_h(t,x)\big)\psi_h(x)\Big]^{x_{j+\half}}_{x_{j-\half}} - \innerproduct{f\big(u_h(t,x)\big),\p_{x}\psi_h(x)}{I_{j}} = 0, 
\end{equation}
holds for each $I_j\in\cT$.
In Equation~\eqref{eq:1DSemidiscreteDG} we have defined the inner product and the surface term from integration by parts, respectively, as
\begin{equation}
    \innerproduct{f,g}{I_{j}} = \int_{I_{j}}f g\,dx
    \quad
    \text{and}
    \quad
    \big[f(x)\big]^{b}_{a} = f(b) - f(a).
\end{equation}
In the DG method the solution $u_h$ is allowed to be discontinuous across element boundaries.
Therefore we need a numerical flux $\hat{f}$ to approximate $f$ at element boundaries
\[
\hat{f}_{j+\frac12}=\hat{f}\big(u_h(t,x_{j+\half})\big) := \widehat{F}\big(u_h(t,x^{-}_{j+\half}),u_h(t,x^{+}_{j+\half})\big),
\]
where $x^{\pm}_{j+1/2}$ denote the left and right limits at the element interface $x_{j+\frac12}$, and $\widehat{F}(\cdot,\cdot)$ is the standard numerical flux, such as an upwind, Godunov, or Lax--Friedrichs flux.

We use a nodal DG representation of the solution $u_h$.
On the element $I_{j}$, we let $\{x^{(j)}_{i}\}_{i=1}^{k+1}$ be the set of Gauss nodes associated with a $(k+1)$-point Gauss quadrature rule, and let $\{w_i\}_{i=1}^{k+1}$ 
be the corresponding quadrature weights.
We then define the basis functions used to represent the DG solution as the Lagrange basis functions constructed on these $(k+1)$ Gauss nodes
\begin{equation}
\ell^{(j)}_{i} = \prod^{k+1}_{q=1,q\neq i} \frac{x-x^{(j)}_q}{x^{(j)}_{i}-x^{(j)}_{q}},
\end{equation}
where $\ell^{(j)}_{i}$ denotes the $i$-th basis function on the element $I_j$.
Using this notation we can express $u_h$ restricted to the element $I_j$ as
\begin{equation}
\label{eq:CellDGNodalsolution}
    u_h(t,x)\Big\rvert_{I_j}=\sum_{i=1}^{k+1}u^{(j)}_{i}(t)\ell^{(j)}_{i}(x),
    \quad
    x\in I_j.
\end{equation}
For concreteness, we employ Lagrange polynomials constructed from Gauss nodes; however, the mesh-based filtering approach is not restricted to this choice of polynomial basis functions.  

We evolve Equation~\eqref{eq:1DSemidiscreteDG} in time using an explicit Runge--Kutta (RK) method.
Let an explicit \(s\)-stage RK method be given by coefficients
\(\{a_{rq}\}_{1\le q<r\le s}\), weights \(\{b_r\}_{r=1}^s\), and nodes
\(\{c_r\}_{r=1}^s\).
Before applying the RK method to Equation~\eqref{eq:1DSemidiscreteDG} we rearrange the equation into the form
\[
    \innerproduct{\p_t u_h,\psi_h}{I_j}
    =
    \mathcal F_j(u_h;\psi_h),
\]
where
\begin{equation}
    \mathcal F_j(v_h;\psi_h)
    =
    \innerproduct{f(v_h),\p_x\psi_h}{I_j}
    - \hat f_{j+\half}(v_h)\psi_h(x_{j+\half}^-)
    + \hat f_{j-\half}(v_h)\psi_h(x_{j-\half}^+).
\end{equation}
Given \(u_h^n\), the RK stage approximations \(u_h^{[r]}\in\bbV_h^k\) are defined by
\begin{subequations}
\label{eq:RKTimeStepping}
\begin{equation}
    \innerproduct{u_h^{[r]},\psi_h}{I_j}
    =
    \innerproduct{u_h^n,\psi_h}{I_j}
    +
    \Delta t_n
    \sum_{q=1}^{r-1}
    a_{rq}\,
    \mathcal F_j(u_h^{[q]};\psi_h),
    \qquad r=1,\ldots,s,
\end{equation}
\begin{equation}
    \innerproduct{u_h^{n+1},\psi_h}{I_j}
    =
    \innerproduct{u_h^n,\psi_h}{I_j}
    +
    \Delta t_n
    \sum_{r=1}^{s}
    b_r\,
    \mathcal F_j(u_h^{[r]};\psi_h).
\end{equation}
\end{subequations}
Here $u^{n+1}_{h} = u_h(t_{n+1},\cdot)$ with $t_{n+1} = t_{n}+\Delta t_n$, and $u^{(r)}_{h} = u(t_n+c_r\Delta t_n,\cdot)$ is the $r$-th RK stage approximation.
The timestep $\Delta t_n$ must obey the following CFL condition for the linear case $f(u) = c\,u$ to ensure the stability of the solution
\begin{equation}
    |c|\frac{\Delta t_n}{\Delta x} \leq \mathrm{CFL}.
\end{equation}
This CFL condition must also be respected for nonlinear fluxes $f(u)$.  
Finally, if the solution becomes discontinuous or enforcing bounds on the solution is necessary, slope and bound-enforcing limiters (in that order) can be applied to the solution after every stage approximation of the RK method
\begin{subequations}
\label{eq:RKTimeSteppingwithLimiters}
\begin{equation}
    \innerproduct{u_h^{[r]},\psi_h}{I_j}
    =
    \Theta\left(\Phi\left(
    \innerproduct{u_h^n,\psi_h}{I_j}
    +
    \Delta t_n
    \sum_{q=1}^{r-1}
    a_{rq}\,
    \mathcal F_j(u_h^{[q]};\psi_h)
    \right)\right),
    \qquad r=1,\ldots,s,
\end{equation}
\begin{equation}
    \innerproduct{u_h^{n+1},\psi_h}{I_j}
    =
    \Theta\left(\Phi\left(
    \innerproduct{u_h^n,\psi_h}{I_j}
    +
    \Delta t_n
    \sum_{r=1}^{s}
    b_r\,
    \mathcal F_j(u_h^{[r]};\psi_h)
    \right)\right).
\end{equation}
\end{subequations}
Here $\Phi$ represents a slope limiter, and $\Theta$ represents a bound-enforcing limiter.

\subsection{DG Method with Mesh-Based Filtering}

\begin{figure}
     \centering
     \begin{subfigure}[b]{0.45\textwidth}
         \centering
         \includegraphics[width=\textwidth]{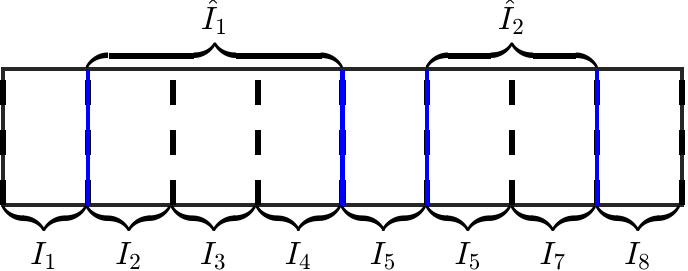}
     \end{subfigure}
     \begin{subfigure}[b]{0.45\textwidth}
         \centering
         \includegraphics[scale=0.75]{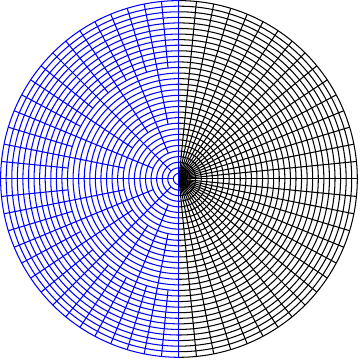}
     \end{subfigure}
        \caption{The left panel depicts a one-dimensional fine mesh of eight cells and its merging region consisting of two merged cells.
        Blue vertical lines indicate the boundaries of the two merged cells.
        Dashed black lines indicate the boundaries of fine cells.  
        The fine and merged cells have been labeled on the bottom and top axes, respectively.
        The right panel depicts a two-dimensional spherical-polar fine mesh (black) and its (reflected) merged mesh (blue).}
        \label{fig:ExampleGrids}
\end{figure}

We now introduce the mesh-based filtering procedure. 
\begin{dfn}
    Let $\cT$ denote the original structured triangulation of the domain $\Omega$.
    We refer to $\cT$ as the \textbf{fine mesh}.
    Let $\widehat{\cT}$ denote a triangulation of $\Omega$ such that each
element of $\widehat{\cT}$ is a union of one or more elements of $\cT$.
    We refer to $\widehat{\cT}$ as the \textbf{merged mesh}.
\end{dfn}
\noindent In general we will use the same notation as outlined in Section \ref{subsec:StandardDG}, and objects associated with the merged mesh $\widehat{\cT}$ will be denoted by a hat.  

\begin{dfn}
    Let \(\Omega_2\subset\Omega\) denote the union of all elements of
$\widehat{\cT}$ that contain two or more elements of $\cT$.
    We refer to $\Omega_2$ as the \textbf{merging region}.
    The elements in $\Omega_2$ can be disjoint.
    Let \(\Omega_1=\Omega\setminus\Omega_2\) denote the union of all elements of
$\widehat{\cT}$ that coincide with single elements of $\cT$.
    We refer to $\Omega_1$ as the \textbf{non-merging region}.
\end{dfn}
\noindent We assume that \(\Omega_1\) is a union of fine cells and that \(\Omega_2\) is partitioned into $\widehat{N}$ disjoint merged cells
\begin{equation}
\label{eq:Omega2_MergedCell_Def}
    \Omega_2
    =
    \bigcup_{p=1}^{\widehat N}\widehat I_p,
    \qquad
    \widehat I_p
    =
    \bigcup_{m\in A_{p}} I_{m}.
\end{equation}
For each \(p\) there is an indexed set $A_p\subset\{1,\ldots,N\}$ which collects the fine cells \(I_{m}\) comprising \(\widehat I_p\).
These fine cells must be contiguous.
We define \(|A_p| \vcentcolon= M_p\geq 1\) as the number of fine cells in the merged cell \(\widehat I_p\), and it may vary with \(p\).
The merged mesh $\widehat{\cT}$ is defined as
\begin{equation}
    \widehat{\cT}
    =
    \{ I_j: I_j\subset\Omega_1\}
    \cup
    \{ \widehat I_p: p=1,\ldots,\widehat N\},
    \quad \overline{\Omega} = \Omega_1\cup\Omega_2.
\end{equation}

To illustrate the relationship between the fine mesh $\cT$ and the merged mesh $\widehat{\cT}$, we present one- and two-dimensional examples in Figure~\ref{fig:ExampleGrids}.  
The left panel depicts a one-dimensional domain consisting of eight fine cells grouped into merged cells of varying sizes with a nonempty $\Omega_1$.
In this example, $\Omega_1 = I_1\cup I_5\cup I_8$, and $\Omega_2 = \widehat{I}_1\cup\widehat{I}_2$, where $\widehat{I}_1 = I_2\cup I_3 \cup I_4$ and $\widehat{I}_2 = I_6\cup I_7$.  
Consequently, $M_1 = 3$ and $M_2 = 2$.  
In practice, merged cells are constructed according to prescribed merging criteria and may therefore contain different numbers of fine cells.  
The right panel shows a two-dimensional spherical-polar example.  
The black mesh denotes $\cT$, while the blue mesh illustrates $\widehat{\cT}$ after reflection about the polar axis.  
Here, cells are merged to obtain roughly uniform element aspect ratios.  
We emphasize that $\widehat{\cT}$ is introduced only to define the mesh-based filter: all RKDG evolution is performed on the fine mesh $\cT$, while the stable time step is determined by the merged mesh $\widehat{\cT}$.

The solution approximation space on the merged mesh $\widehat{\cT}$ is
\begin{equation}
    \widehat{\bbV}^k_h 
    =\{v_h: v_h\big|_K\in\bbP_k(K), \,
    \forall K\in\widehat{\cT} \}
    = \{\psi_h: \psi_h\rvert_{I_j}\in \bbP_k(I_j), \forall I_j\in\Omega_1\} \cup \{\hat{\psi}_h: \hat{\psi}_h\rvert_{\widehat{I}_p}\in \bbP_k(\widehat{I}_p), \forall \widehat{I}_p\in\Omega_2\}.
\end{equation}
The corresponding DG solution $\hat{u}_h\in\widehat{\bbV}^{k}_{h}$ is given by
\begin{equation}
\label{eq:cellDGNodalsolution_merge}
    \hat{u}_{h}(t,x)
    = 
    \begin{cases}
    &\displaystyle\sum_{i=1}^{k+1}u^{(j)}_{i}(t)\ell^{(j)}_{i}(x) \quad \text{if} \quad x\in I_{j}\subset\Omega_1,\\
    &\displaystyle\sum_{i=1}^{k+1}\hat{u}^{(p)}_{i}(t)\hat{\ell}^{(p)}_{i}(x) \quad \text{if} \quad x\in\widehat{I}_{p}\subset\Omega_2.
    \end{cases}
\end{equation}
Here $\hat{\ell}^{(p)}_{i}$ denotes the $i$-th Lagrange basis function on the merged cell \(\widehat I_p\), formed from the $(k+1)$ Gauss nodes $\{\hat{x}^{(p)}_{i}\}^{k+1}_{i=1}\subset\widehat{I}_{p}$.
The left panel in the top row of Figure \ref{fig:linearbases} depicts linear $\ell^{(j)}_{i}$ and $\hat{\ell}^{(p)}_{i}$ for an element $\widehat{I}_{p}$ consisting of two fine cells.
On the unmerged cells in $\Omega_1$, \(\widehat u_h\) has the same local representation as the fine-mesh solution \(u_h\).

\begin{figure}
     \centering
     \begin{subfigure}[b]{0.45\textwidth}
         \centering
         \includegraphics[width=\textwidth]{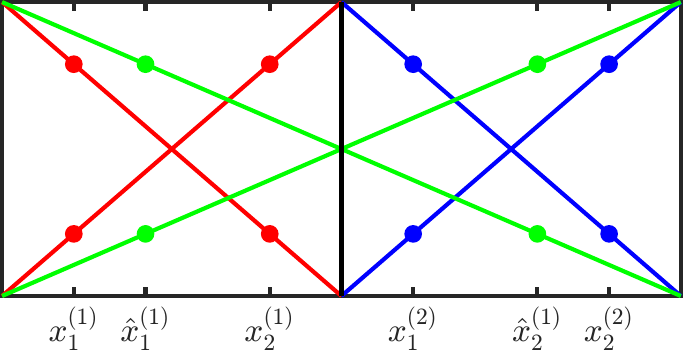}
         \label{subfig:linearbases}
     \end{subfigure}
     \hfill
     \begin{subfigure}[b]{0.45\textwidth}
         \centering
         \includegraphics[width=\textwidth]{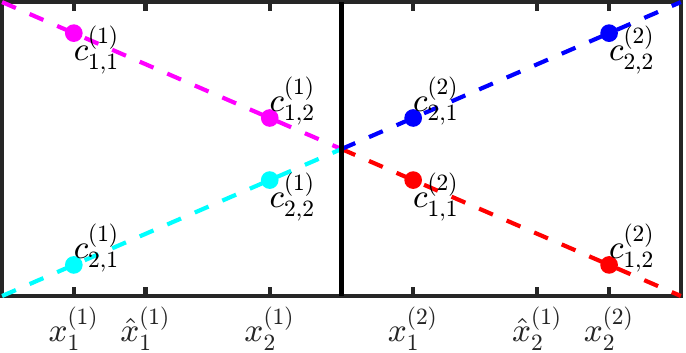}
         \label{subfig:locallinearmergebasis}
     \end{subfigure}
     \begin{subfigure}[b]{0.45\textwidth}
         \centering
         \includegraphics[width=\textwidth]{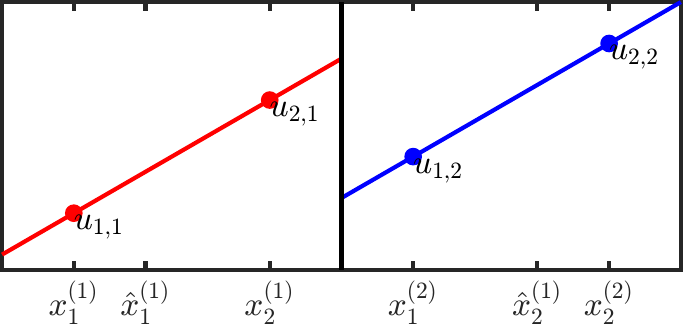}
         \label{subfig:examplesolution}
     \end{subfigure}
     \hfill
     \begin{subfigure}[b]{0.45\textwidth}
         \centering
         \includegraphics[width=\textwidth]{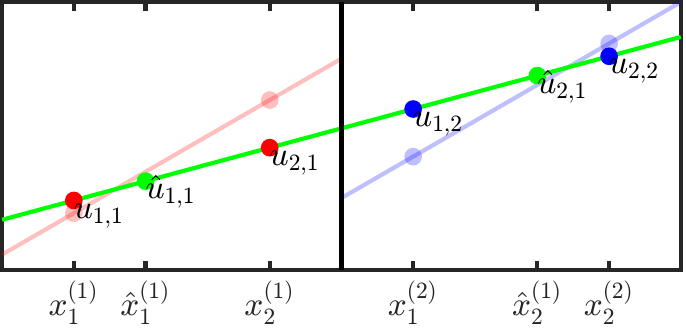}
         \label{subfig:examplesolutionmerged}
     \end{subfigure}
        \caption{Top row: example of local linear bases (left) on two cells (red and blue) and the basis for the two cells when merged together (green), and the merged basis functions (right) expressed locally on each cell.
        The nodal values are the cell merging weights as defined in Section \ref{subsec:MergingWeights}.
        Bottom row: example of a linear DG solution across two cells (left), and the same solution (right) after the projection--prolongation step.
        The green nodes are the nodal values on the merged cell, while the red and blue nodes are the nodal values restricted to the fine cells.}
        \label{fig:linearbases}
\end{figure}

Let \(\Delta t\) denote a stable time step for the fine mesh \(\cT\), and let \(\widehat{\Delta t}\) denote the corresponding stable time step associated with the merged mesh \(\widehat{\cT}\).
In the ideal case, $\widehat{\Delta t}\gg\Delta t$.
Advancing the fine-mesh DG solution directly with an ideal \(\widehat{\Delta t}\) would generally violate the fine-mesh CFL restriction.
The purpose of the mesh-based filtering strategy is to evolve the fine-mesh solution $u_h$ using the larger time step size $\widehat{\Delta t}$ by projecting it onto the merged mesh and prolonging onto the fine mesh after each timestep. 
In one dimension, we will show that this filtered fine-mesh update is equivalent to an update performed on the merged mesh.

The key steps of the mesh-based filtering process are as follows
\begin{steps}
    \item Determine the merged mesh $\widehat{\cT}$ based on the original mesh $\cT$.
    \item Project the fine-mesh solution at the current time level onto $\widehat{\cT}$ and then prolong the resulting merged-mesh polynomial back to the fine mesh $\cT$.
    \item Evolve the degrees of freedom on $\cT$ with the timestep determined by $\widehat{\cT}$. 
    \item Repeat Steps 2 and 3 until the solution is evolved to the final time. 
    For multi-stage Runge--Kutta methods, the projection--prolongation operation (Step 2) must be applied at each inner stage.
\end{steps}
Figure \ref{fig:MergingFlowchart} provides a flowchart of the mesh-based filter algorithm with Forward Euler time integration incorporating the above steps.
\begin{figure}
    \centering
    \includegraphics[scale=0.7]{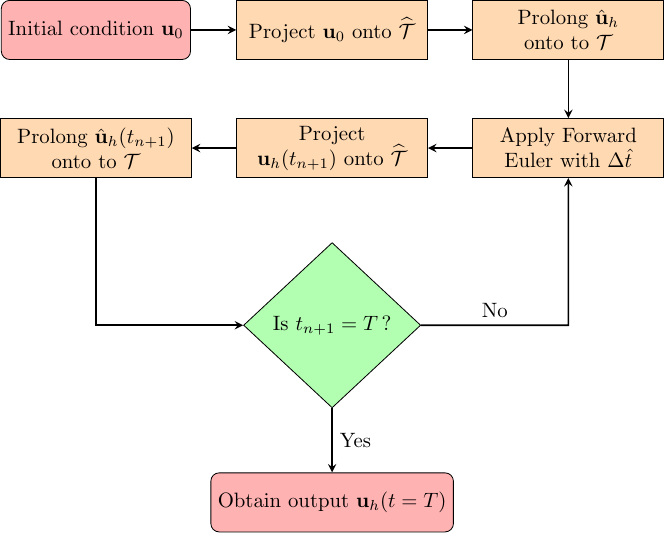}
    \caption{A flowchart describing the mesh-based filtering process using Forward Euler.}
    \label{fig:MergingFlowchart}
\end{figure}

To define the filter precisely and make the algorithm explicit, we define the following projection operators $\cP$ and $\widehat{\cP}$:
\begin{subequations}
\begin{equation}
\label{eq:FineCellProjection}
    \int_{I_{j}}\cP[u]\,\ell^{(j)}_{i}\,dx
    =
    \int_{I_{j}}u\,\ell^{(j)}_{i}\,dx
    \quad
    \text{holds for all}
    \quad
    i=1,\ldots,k+1
    \quad
    \text{and all}
    \quad
    I_j\in\Omega\in\cT,
\end{equation}
\begin{equation}
\label{eq:MergedCellProjection}
    \int_{\widehat{I}_{p}}\widehat{\cP}[u]\,\hat{\ell}^{(p)}_{i}\,dx
    =
    \int_{\widehat{I}_{p}}u\,\hat{\ell}^{(p)}_{i}\,dx
    \quad
    \text{holds for all}
    \quad
    i=1,\ldots,k+1
    \quad
    \text{and all}
    \quad
    \widehat{I}_p\in\Omega_2.
\end{equation}
\end{subequations}
The operator $\cP$ defines the $L^2$ projection onto an element $I_j\subset\cT$ in the fine space, while the operator $\widehat{\cP}$ defines the $L^2$ projection onto an element $\widehat{I}_{p}\subset\Omega_2$ in the merged space.
Then for any fine cell $I_{m}\subset\widehat{I}_p$, the projection--prolongation update (Step 2) of the algorithm is done by updating $u_h$ with the following equation
\begin{equation}
    \label{eq:ProjectionProlongationUpdate}
    u_h\Big\rvert_{I_{m}} = \widehat{\cP}[u_h]\Big\rvert_{I_{m}}.
\end{equation}
That is, \(u_h\) is first projected onto the polynomial space on the merged cell \(\widehat I_p\), and the resulting polynomial is then restricted back to each fine cell contained in \(\widehat I_p\).
This operation makes $u_h$ continuous across each $\widehat{I}_p$.

In the next subsection we show, under the assumptions stated there, that advancing the fine-mesh DG solution by one Forward Euler step with time step \(\widehat{\Delta t}\), followed by the projection--prolongation update \eqref{eq:ProjectionProlongationUpdate}, is equivalent to advancing the DG solution directly on the merged mesh \(\widehat{\cT}\) with the same time step.

\subsection{Projection--Prolongation Filtering Formula and Equivalence}

We now derive the nodal form of the projection--prolongation operator and prove that employing it with the RKDG method on the fine mesh is equivalent, in one dimension, to evolving on the merged mesh.  
Throughout this subsection we assume that all projection and DG volume integrals are evaluated exactly, or with quadrature rules that are consistent with the nodal mass matrices used below. 
We also assume that no limiter is applied unless explicitly stated.

\subsubsection{Projection onto a Merged Cell}

On a merged cell \(\hat I_p\), the projection of \(u_h\) onto \(\bbP_k(\hat I_p)\) is written as
\begin{equation}
\label{eq:DGMergeRepresentation}
    \widehat{\cP}u_h\big|_{\hat I_p}
    =
    \hat u_h\big|_{\hat I_p}
    =
    \sum_{i=1}^{k+1}\hat u_i^{(p)}\hat\ell_i^{(p)} .    
\end{equation}
Consider the nodal DG representation $u_h$ with Equation~\eqref{eq:CellDGNodalsolution}.
Using the definition of \(\widehat{\cP}\) in Equation~\eqref{eq:MergedCellProjection}, the nodal coefficients are
\begin{equation}
    \label{eq:MergedNodalValue}
    \hat{u}^{(p)}_{i}
    =
    \frac{\int_{\widehat{I}_p}u_h\,\hat{\ell}^{(p)}_{i}\,dx}{\widehat{V}^{(p)}_{i}},
    \quad \text{where} \quad
    \widehat{V}^{(p)}_{i} = \int_{\widehat{I}_{p}}\hat{\ell}^{(p)}_{i}\hat{\ell}^{(p)}_{i}\,dx.
\end{equation}
This expression for the nodal coefficients is arrived at by taking advantage of the orthogonality of Lagrange polynomials constructed with Gauss nodes.
Otherwise solving for the coefficients will require inverting the mass matrix.
The quantity $V^{(j)}_{i}$ is defined the same way as $\widehat{V}^{(p)}_{i}$ but using $\ell^{(j)}_{i}$ and integrating over the cell $I_{j}$
\begin{equation}
    V^{(j)}_{i} = \int_{I_j} \ell^{(j)}_{i}\ell^{(j)}_{i}\,dx.
\end{equation}

\subsubsection{The Cell Merging Weights}

\label{subsec:MergingWeights}

Equation~\eqref{eq:MergedNodalValue} provides a formula for the nodal values of the solution projected onto the merged cell.  
To prolong $\widehat{\cP}u_h\big|_{\hat I_p}$ in Equation~\eqref{eq:DGMergeRepresentation} back to the fine mesh, we express $\hat{\ell}^{(p)}_{i}$ as a linear combination of the local fine basis functions.
We do this with what we call cell merging weights.
Considering the fine cell $I_{m}\subset \widehat{I}_p$, we desire
\begin{equation}
\label{eq:localMergedBasis}
    \hat{\ell}^{(p)}_{i}
    =
    \sum_{\alpha=1}^{k+1}c^{(m)}_{i,\alpha}\ell^{(m)}_{\alpha}
    \quad
    \text{for all}
    \quad
    i=1,\ldots,k+1,
\end{equation}
where $c^{(m)}_{i,\alpha}$ is the weight associated to cell $I_{m}$, fine basis function $\ell^{(m)}_{\alpha}$, and merged basis function $\hat{\ell}^{(p)}_{i}$.
Since we use Lagrange polynomials for $\ell^{(m)}_{\alpha}$, these weights are easily determined by evaluating $\hat{\ell}^{(p)}_{i}$ at a fine mesh node (and using the cardinal property)
\begin{equation}
\label{eq:MergingWeights1D}
    c^{(m)}_{i,\alpha}
    =
    \hat{\ell}^{(p)}_{i}(x^{(m)}_{\alpha}),
\end{equation}
where $x^{(m)}_{\alpha}$ is the $\alpha$-th fine-mesh node on $I_{m}$.
The right panel of the top row of Figure \ref{fig:linearbases} depicts this relation between the cell merging weights and the merged basis functions.
Note that $c^{(m)}_{i,\alpha}$ inherently depends on the index $p$ of the merged cell through Equation~\eqref{eq:MergingWeights1D}, meaning the weights can differ from merged cell to merged cell.
It should also be noted that if the fine mesh is uniform, then the cell merging weights need only be computed for each distinct value of $M_p$ (recall that $M_p$ represents the number of fine cells a merged cell spans).
Taking advantage of this property may greatly reduce the amount of memory used.

\subsubsection{Prolonging onto a Fine Cell}

The final step is to prolong $\widehat{\cP}u_h\big|_{\hat I_p}$ in Equation~\eqref{eq:DGMergeRepresentation} back to the fine mesh $\cT$ by using Equation~\eqref{eq:localMergedBasis}.
We first split the integral in Equation~\eqref{eq:MergedNodalValue} over the $M_p$ contiguous fine cells which comprise $\widehat{I}_p$ and use Equation~\eqref{eq:localMergedBasis} to express $\hat{\ell}^{(p)}_{i}$ locally
\begin{equation}
    \hat{u}^{(p)}_{i}
    =
    \frac{1}{\widehat{V}^{(p)}_{i}}
    \sum_{m=1}^{M_p}
    \int_{I_{m}}
    \bigg( \sum_{\alpha=1}^{k+1}u^{(m)}_{\alpha}\ell^{(m)}_{\alpha}\bigg)
    \bigg( \sum_{\beta=1}^{k+1}c^{(m)}_{i,\beta}\ell^{(m)}_{\beta}\bigg)
    \,dx.
\end{equation}
Here $u_h$ has also been replaced by its nodal representation on the element $I_{m}$.
We then evaluate each integral using the $(k+1)$ Gauss nodes and weights.
Due to the discrete orthogonality of the local basis, the majority of the terms disappear and the result is
\begin{equation}
\label{eq:localMergedNodalValue}
   \hat{u}^{(p)}_{i}
    =
    \sum_{m=1}^{M_p} \sum_{\alpha=1}^{k+1}\frac{c^{(m)}_{i,\alpha}V^{(m)}_{\alpha}}{\widehat{V}^{(p)}_{i}}u^{(m)}_{\alpha}.
\end{equation}
Finally we use Equations~\eqref{eq:localMergedBasis} and \eqref{eq:localMergedNodalValue} in Equation~\eqref{eq:DGMergeRepresentation} to update the nodal DG solution on the fine cell $I_{n}\subset \widehat{I}_p$
\begin{equation}
\label{eq:UpdatedCellDGNodalsolution}
    \hat{u}_h\Big\rvert_{I_{n}}
    =
    \sum_{i=1}^{k+1} \hat{u}^{(p)}_{i} \hat{\ell}^{(p)}_{i} \Big\rvert_{I_{n}}
    =
    \sum_{\beta=1}^{k+1}
    \bigg(
    \sum_{m=1}^{M_p}\sum_{i=1}^{k+1}\sum_{\alpha=1}^{k+1}
    \frac{c^{(m)}_{i,\alpha}c^{(n)}_{i,\beta}V^{(m)}_{\alpha}}{\widehat{V}^{(p)}_{i}}
    u^{(m)}_{\alpha}
    \bigg)
    \ell^{(n)}_{\beta}.
\end{equation}
Comparing Equation~\eqref{eq:UpdatedCellDGNodalsolution} to Equation~\eqref{eq:CellDGNodalsolution}, we see that updated expansion coefficients on the fine cell $I_{n}$ can be expressed in terms of the original expansion coefficients as
\begin{equation}
\label{eq:UpdatedNodalValues}
    \tilde u^{(n)}_{\beta}
    =
    \sum_{m=1}^{M_p}\sum_{i=1}^{k+1}\sum_{\alpha=1}^{k+1}
    \frac{c^{(m)}_{i,\alpha}c^{(n)}_{i,\beta}V^{(m)}_{\alpha}}{\widehat{V}^{(p)}_{i}}
    u^{(m)}_{\alpha}.
\end{equation}
The projection--prolongation step (Step 2 of the algorithm) is implemented by replacing \(u_\beta^{(j_n)}\) with \(\tilde u_\beta^{(j_n)}\) for every fine cell \(I_{j_n}\subset\hat I_p\).
This process is thus repeated for all merged cells $\widehat{I}_{p}\in\Omega_2$.
\begin{rem}
	If \(M_p=1\) and $p\in A_{p}=\{j\}$, then \(\hat I_p=I_j\), \(\hat\ell_i^{(p)}=\ell_i^{(j)}\), and \(c_{i,\alpha}^{(j)}=\delta_{i\alpha}\). 
	Hence Equation~\eqref{eq:UpdatedNodalValues} reduces to \(\tilde u_\beta^{(j)}=u_\beta^{(j)}\).  
	Thus, the filter is the identity operator on unmerged cells.
\end{rem}

The bottom row of Figure \ref{fig:linearbases} shows a simple example of the merging process on a linear DG solution using two fine cells and one merged cell.
The left panel shows the solution after it has been updated from $u_{h}(t,x)$ to $u_{h}(t+\widehat{\Delta t},x)$ for some timestep $\widehat{\Delta t}$ determined by the merged mesh.
The right panel shows the solution after the projection--prolongation step.
The projected solution is plotted in green.
Note that the projection will make the solution continuous across the fine cells within the merged cell.
The solution may still be discontinuous across adjacent merged cells.  
The updated nodal values are indicated by the red and blue dots on the green line.

\subsubsection{Analysis on the Mesh-Based Filter}

To prove our claim that, under certain assumptions, advancing the fine-mesh DG solution by one Forward Euler step with time step \(\widehat{\Delta t}\), followed by the projection--prolongation update \eqref{eq:ProjectionProlongationUpdate}, is equivalent to advancing the DG solution directly on the merged mesh \(\widehat{\cT}\) with the same time step, we first prove the following lemma regarding how to set the initial condition.

\begin{lem}
\label{lem:InitialConditionEquivalence}
    For any \(u\in L^2(\Omega)\),
\[
    \widehat{\cP}[\cP [u]] = \widehat{\cP}[u] .
\]
    In particular, if the initial condition is projected onto $\cT$ and then projected onto $\widehat{\cT}$, this is equivalent to projecting directly onto $\widehat{\cT}$.
\end{lem}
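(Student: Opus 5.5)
The plan is to compare the defining relations \eqref{eq:FineCellProjection} and \eqref{eq:MergedCellProjection} directly, using the nesting of the two approximation spaces. On an unmerged cell $I_j\subset\Omega_1$ both operators are the $L^2$ projection onto $\bbP_k(I_j)$. There $\widehat{\cP}\circ\cP=\cP\circ\cP=\cP=\widehat{\cP}$, because an $L^2$ projection is idempotent. So only the merged cells $\widehat I_p\subset\Omega_2$ need work, and since $\widehat{\cP}$ acts cell by cell, I will fix one such $\widehat I_p=\bigcup_{m\in A_p}I_m$.

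The key observation is that each merged basis function restricts to a fine-space polynomial. By \eqref{eq:localMergedBasis}, $\hat\ell^{(p)}_i|_{I_m}\in\bbP_k(I_m)$ for every $m\in A_p$, with $\hat\ell^{(p)}_i|_{I_m}=\sum_\alpha c^{(m)}_{i,\alpha}\ell^{(m)}_\alpha$. This follows simply because the restriction of a degree-$k$ polynomial on $\widehat I_p$ to a subinterval is still of degree at most $k$. Then for each $i$ I will compute
\[
\int_{\widehat I_p}\cP[u]\,\hat\ell^{(p)}_i\,dx=\sum_{m\in A_p}\sum_{\alpha=1}^{k+1}c^{(m)}_{i,\alpha}\int_{I_m}\cP[u]\,\ell^{(m)}_\alpha\,dx
=\sum_{m\in A_p}\sum_{\alpha=1}^{k+1}c^{(m)}_{i,\alpha}\int_{I_m}u\,\ell^{(m)}_\alpha\,dx=\int_{\widehat I_p}u\,\hat\ell^{(p)}_i\,dx .
\]
The first equality splits the integral over the fine cells and expands $\hat\ell^{(p)}_i$ locally. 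The middle equality is \eqref{eq:FineCellProjection}, and the last reassembles the sum. Thus $\cP[u]$ and $u$ have the same moments against every $\hat\ell^{(p)}_i$. By \eqref{eq:MergedCellProjection}, which determines $\widehat{\cP}[\cdot]$ uniquely because the mass matrix on $\widehat I_p$ is nonsingular, it follows that $\widehat{\cP}[\cP[u]]=\widehat{\cP}[u]$ on $\widehat I_p$. The nodal formula \eqref{eq:MergedNodalValue} shows the same thing, since the numerators coincide.

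The statement about initial conditions is then immediate: take $u=u_0$. The only point I expect to need care is the exactness caveat. If integrals are replaced by quadrature, the middle equality requires that the fine-cell projection in \eqref{eq:FineCellProjection} and the merged-cell moments be computed with quadrature consistent on each $I_m$. This is the standing assumption of this subsection, so I will state it explicitly and then carry out the argument in the exact-integration setting.
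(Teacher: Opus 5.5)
Your proposal is correct and follows the paper's proof: the paper also fixes a merged cell, notes that every $\hat\psi\in\mathbb{P}_k(\hat I_p)$ restricts to a fine-space polynomial on each $I_{j_m}$, applies the fine-cell projection identity on each cell, and sums to conclude that $\mathcal{P}u$ and $u$ have the same moments on $\hat I_p$. Your expansion of $\hat\ell^{(p)}_i$ through the merging weights $c^{(m)}_{i,\alpha}$ just writes out that restriction step explicitly. Your remarks on $\Omega_1$ and on quadrature consistency are harmless additions.
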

\begin{proof}
It is sufficient to prove the claim for a single merged cell \(\hat I_p\). 
Let \(\hat\psi\) be an arbitrary polynomial in the space $\bbP_k(\hat I_p)$. 
Since \(\hat\psi|_{I_{j_m}}\in\bbP_k(I_{j_m})\) for each fine cell \(I_{j_m}\subset\hat I_p\), the definition of the fine-cell projection \(\cP\) gives
\[
    \int_{I_{j_m}}(\cP u-u)\hat\psi\,dx=0,
    \qquad m=1,\ldots,M_p .
\]
Summing over the fine cells in \(\hat I_p\) yields
\[
    \int_{\hat I_p}(\cP u-u)\hat\psi\,dx=0 .
\]
Therefore \(\cP u\) and \(u\) have the same moments against every basis function of \(\bbP_k(\hat I_p)\), which proves \(\widehat{\cP}[\cP [u]]=\widehat{\cP}[u]\).
\end{proof}

\begin{thm}
\label{thm:SolutionEquivalence}
Assume that, at time \(t_n\), the fine-mesh solution \(u_h^n\) is the prolongation of the merged-mesh solution \(\hat u_h^n\), that is
\[
    u_h^n\big|_{I_{j_m}}
    =
    \hat u_h^n\big|_{I_{j_m}}
    \qquad
    \text{for every } I_{j_m}\subset\hat I_p .
\]
Let \(u_h^{n+1,*}\) be obtained by applying one Forward Euler step on \(\cT\) with the merged-mesh time step \(\widehat{\Delta t}\), and let \(\hat u_h^{n+1}\) be obtained by applying one Forward Euler step on \(\widehat{\cT}\) with the same time step. 
Then
\begin{equation}
    \widehat{\cP}u_h^{n+1,*}
    =
    \hat u_h^{n+1}.
    \label{eq:theorem_equivalence}
\end{equation}
Therefore, the fine-mesh update after the projection--prolongation step is equivalent to the merged-mesh DG update.    
\end{thm}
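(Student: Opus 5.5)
The proof tests both updates against an arbitrary merged-mesh test function and compares them term by term. On $\Omega_1$ the two meshes coincide, and by the hypothesis $u_h^n$ and $\hat u_h^n$ agree there as well. On an unmerged cell both updates are therefore the same DG Forward Euler step, and $\widehat{\cP}$ acts as the identity by the Remark. One caveat: if the unmerged cell neighbours a merged cell, its boundary flux uses the adjacent trace. Since $u_h^n=\hat u_h^n$ pointwise, that trace is the same on both meshes, so the fluxes agree. It therefore suffices to fix a merged cell $\widehat I_p=\bigcup_{m\in A_p}I_m$, contiguous with endpoints $\hat x_{p-\half}$ and $\hat x_{p+\half}$, and show that for every $\hat\psi\in\bbP_k(\widehat I_p)$
\[
\int_{\widehat I_p} u_h^{n+1,*}\hat\psi\,dx=\int_{\widehat I_p}\hat u_h^{n+1}\hat\psi\,dx .
\]
By the definition \eqref{eq:MergedCellProjection} of $\widehat{\cP}$, this identity yields \eqref{eq:theorem_equivalence}.

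\textbf{Key step.} For each fine cell $I_m\subset\widehat I_p$, the restriction $\hat\psi|_{I_m}$ lies in $\bbP_k(I_m)$, so it is an admissible fine test function. The fine Forward Euler step gives
\[
\int_{I_m}u_h^{n+1,*}\hat\psi\,dx=\int_{I_m}u_h^n\hat\psi\,dx+\widehat{\Delta t}\,\mathcal F_m(u_h^n;\hat\psi|_{I_m}).
\]
Summing over $m\in A_p$ and using $u_h^n=\hat u_h^n$ on $\widehat I_p$, the mass terms add to $\int_{\widehat I_p}\hat u_h^n\hat\psi\,dx$.

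\textbf{Telescoping.} Next I show that $\sum_m\mathcal F_m(u_h^n;\hat\psi)$ equals the merged-mesh operator $\widehat{\mathcal F}_p(\hat u_h^n;\hat\psi)$. The volume integrals $\int_{I_m}f(u_h^n)\p_x\hat\psi\,dx$ sum to $\int_{\widehat I_p}f(\hat u_h^n)\p_x\hat\psi\,dx$. The boundary terms are handled as follows.
\begin{itemize}
\item At an interior interface $x_{m+\half}$ of $\widehat I_p$, the solution $u_h^n$ coincides with a single polynomial $\hat u_h^n$, so $u^-=u^+$. Consistency of the numerical flux, $\widehat F(u,u)=f(u)$, then gives $\hat f_{m+\half}=f(\hat u_h^n(x_{m+\half}))$.
\item Since $\hat\psi$ is continuous at that interface, the contribution $-\hat f\hat\psi(x^-)$ from cell $m$ cancels $+\hat f\hat\psi(x^+)$ from cell $m+1$.
\item At the two outer endpoints $\hat x_{p\pm\half}$, the fine traces equal the merged traces on both sides, by the hypothesis in $\widehat I_p$ and in its neighbour. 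Hence the fine fluxes equal the merged fluxes $\hat f_{p\pm\half}$.
\end{itemize}
The summed identity is therefore exactly the merged DG Forward Euler step tested with $\hat\psi$, which proves the claim.

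\textbf{Main obstacle.} The main care is needed at the interface and quadrature steps. The cancellation requires both continuity of $u_h^n$ inside $\widehat I_p$, which is where the hypothesis is essential, and flux consistency. The volume integrals must also be evaluated exactly, or by fine-cell quadrature consistent with merged-cell quadrature, which is the standing assumption of the subsection. With inexact quadrature for nonlinear $f$, the sum of fine-cell Gauss rules need not equal the merged-cell rule. I would state this reliance explicitly: under the assumption, both the volume terms and the mass terms are exact. The final sentence of the theorem follows by combining \eqref{eq:theorem_equivalence} with the update \eqref{eq:ProjectionProlongationUpdate}. Because the filtered solution is again a prolongation of $\hat u_h^{n+1}$, the hypothesis is restored, so induction over steps and over RK stages, which are convex combinations of Forward Euler-type updates, extends the result. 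Lemma~\ref{lem:InitialConditionEquivalence} supplies the base case.
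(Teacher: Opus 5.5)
Your proof is correct and follows the paper's argument. You test the fine-mesh Forward Euler step on each $I_m\subset\widehat I_p$ with a merged test function, sum so the interior flux terms cancel, use $u_h^n=\hat u_h^n$ to recover the merged-cell weak form, and conclude by uniqueness of the projection; note only two over-statements. The interior cancellation needs just single-valuedness of the numerical flux and continuity of $\hat\psi$, so the hypothesis is actually used in the mass, volume, and outer-trace terms rather than there. The stage-by-stage RK extension holds for any explicit RK method by linearity, not only for convex combinations of Forward Euler steps.
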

\begin{proof}
	For any merged cell \(\hat I_p\), consider a merged basis function \(\hat\ell_i^{(p)}\). 
	Since \(\hat\ell_i^{(p)}|_{I_{j_m}}\in\bbP_k(I_{j_m})\), we can use it as a test function in the fine-mesh DG update on each fine cell \(I_{j_m}\subset\hat I_p\). 
	Summing the Forward Euler update over these fine cells gives
	\[
    		\innerproduct{u_h^{n+1,*},\hat\ell_i^{(p)}}{\hat I_p}
    		=
    		\innerproduct{u_h^n,\hat\ell_i^{(p)}}{\hat I_p}  
    		+\widehat{\Delta t}
    		\sum_{m=1}^{M_p}
    		\innerproduct{f(u_h^n),\p_x\hat\ell_i^{(p)}}{I_{j_m}}
    		-\widehat{\Delta t}\,\mathcal B_p ,
	\]
	where \(\mathcal B_p\) is the sum of the numerical-flux boundary terms over the fine cells in \(\hat I_p\). 
	The contributions on internal fine cell interfaces cancel because the numerical flux is single-valued and \(\hat\ell_i^{(p)}\) is continuous across those interfaces. 
	Hence only the two boundary contributions at \(\partial\hat I_p\) remain.  
	Using the assumption \(u_h^n=\hat u_h^n\) on \(\hat I_p\), we obtain
	\[
    		\innerproduct{u_h^{n+1,*},\hat\ell_i^{(p)}}{\hat I_p}
    		=
    		\innerproduct{\hat u_h^n,\hat\ell_i^{(p)}}{\hat I_p}
    		+\widehat{\Delta t}
    		\innerproduct{f(\hat u_h^n),\p_x\hat\ell_i^{(p)}}{\hat I_p} 
    		-\widehat{\Delta t}
    		\Big[
    			\hat f(\hat u_h^n)\hat\ell_i^{(p)}
    		\Big]_{x_{p-\half}}^{x_{p+\half}} .
	\]
	This is precisely the weak form of the Forward Euler update on the merged cell \(\hat I_p\). 
	Therefore
	\[
    		\innerproduct{\widehat{\cP}u_h^{n+1,*},\hat\ell_i^{(p)}}{\hat I_p}
    		=
    		\innerproduct{u_h^{n+1,*},\hat\ell_i^{(p)}}{\hat I_p}
    		=
    		\innerproduct{\hat u_h^{n+1},\hat\ell_i^{(p)}}{\hat I_p}
	\]
	for all \(i=1,\ldots,k+1\). 
	By uniqueness of the projection in \(\bbP_k(\hat I_p)\), the two polynomials, $\widehat{\cP}u_h^{n+1,*}$ and $\hat u_h^{n+1}$, are equal on the merged cell \(\hat I_p\).
	Repeating the argument for every merged cell, we conclude Equation~\eqref{eq:theorem_equivalence}.
\end{proof}
\begin{rem}
	Lemma~\ref{lem:InitialConditionEquivalence} provides the equality assumption in Theorem~\ref{thm:SolutionEquivalence} at \(t=0\). 
	The theorem then propagates this equality to any $t_n$ by induction after each projection--prolongation step.
\end{rem}
\begin{rem}[Explicit Runge--Kutta equivalence]
	Theorem~\ref{thm:SolutionEquivalence} is stated for a Forward Euler step for clarity. 
	The same argument extends to any explicit RK method, provided that the projection--prolongation operation is applied to each inner stage solution before the DG residual is evaluated. 
	If the fine-mesh stage solution is the prolongation of the corresponding merged-mesh stage solution, then the argument in the proof of Theorem~\ref{thm:SolutionEquivalence} shows that the projected fine-mesh residual equals the merged-mesh residual. 
	Applying this argument stage by stage gives
	\(\widehat{\cP}u_h^{[r]} = \widehat u_h^{[r]}, \, r=1,\ldots,s, \)
	and hence \( \widehat{\cP}u_h^{n+1} = \widehat u_h^{n+1}\).
	Thus the filtered fine-mesh RK update is equivalent to the RK update on the merged mesh under the same assumptions as in Theorem~\ref{thm:SolutionEquivalence}.
\end{rem}
\begin{rem}
	If a source term \(s(u)\) is included in the model, the same argument holds, provided it is evaluated with the same quadrature and representation on the fine and merged meshes. 
\end{rem}
\begin{rem}\label{rem:Limiters}
    If a bound-enforcing or slope limiter is applied on the merged representation after projection onto $\widehat{\cT}$ and before prolongation to $\cT$, then equivalence between $u_{h}$ and $\hat{u}_{h}$ will still hold. 
    However if the limiter is applied directly on the fine mesh $\cT$ when mesh-based filtering is used, then the equivalence in Theorem~\ref{thm:SolutionEquivalence} may be lost. 
\end{rem}

\subsection{Numerical Examples in One Dimension}

To give a simple demonstration of the mesh-based filtering process and validate the
projection--prolongation procedure, we present numerical results for the one-dimensional linear transport equation and nonlinear Burgers' equation, which correspond to $f(u(t,x)) = u(t,x)$ and $f(u(t,x))= \frac{1}{2}u^2(t,x)$, respectively, in Equation~\eqref{eq:linearadvectionPDE}.

We set the total number of fine mesh cells as $N = \{20,40,80,160,320,640\}$.
When we simulate with the mesh-based filtering process, we merge cells in groups of two so that
\[
\widehat{I}_p = I_{2p-1}\cup I_{2p}, \quad \text{for} \quad p = 1,\ldots,N/2.
\]
As a result, when we simulate with $N$ cells, we expect the mesh-based filtering solution to be comparable to a solution with $N/2$ cells which does not use mesh-based filtering.
This criterion for merging was chosen only for its simplicity.

\begin{table}[ht]
    \centering
    \begin{tabular}{crrrrrrr}
        \tableline
        & \multicolumn{3}{c}{First Order ($k=0$)}
        & \multicolumn{2}{c}{Second Order ($k=1$)}
        & \multicolumn{2}{c}{Third Order ($k=2$)}\\
        \tableline
        $N$ &Fine Mesh &Mesh Filter &Fine Mesh+$\widehat{\Delta t}$ &Fine Mesh &Mesh Filter &Fine Mesh &Mesh Filter\\
        \tableline
        20  &7.3422e-02  &1.4448e-01 &3.389e\texttt{+}01 &8.8216e-03 &3.7545e-02 &9.8042e-05 &8.1029e-04 \\
        \tableline
        40  &3.6393e-02 &7.3422e-02 &1.1403e\texttt{+}01 &2.1374e-03 &8.8216e-03 &1.2161e-05 &9.8042e-05  \\
        \tableline
        80  &1.7580e-02 &3.6393e-02 &9.3767e\texttt{+}02 &5.3140e-04 &2.1374e-03 &1.5173e-06 &1.2161e-05 \\
        \tableline
        160 &8.7902e-03 &1.7580e-02 &2.0302e\texttt{+}022 &1.3421e-04 &5.3140e-04 &1.8958e-07 &1.5173e-06 \\
        \tableline
        320 &4.3867e-03 &8.7902e-03 &1.5589e\texttt{+}060 &3.3697e-05 &1.3421e-04 &2.3698e-08 &1.8958e-07 \\
        \tableline
        640 &2.1830e-03 &4.3867e-03 &1.8446e\texttt{+}136 &8.5447e-06 &3.3697e-05 &2.9622e-09 &2.3698e-08 \\
        \tableline
    \end{tabular}
    \caption{$L^2$ error of the linear transport solution at time $t = 1$ when using first, second, and third order DG methods. The errors of the fine mesh and mesh filter solutions are compared.
    The error of the mesh-based filtering solution is computed by projecting the solution onto $\widehat{\cT}$.
    For the first order method, we include results when the fine-mesh solution is advanced with $\widehat{\Delta t}$ without applying the mesh-based filter (column labeled ``Fine Mesh+$\widehat{\Delta t}$'').
    }
    \label{tab:LinearTransport_k=0_p=1}
\end{table}

\begin{figure}
    \centering
    \centering
     \begin{subfigure}[b]{0.45\textwidth}
         \centering
         \includegraphics[width=\textwidth]{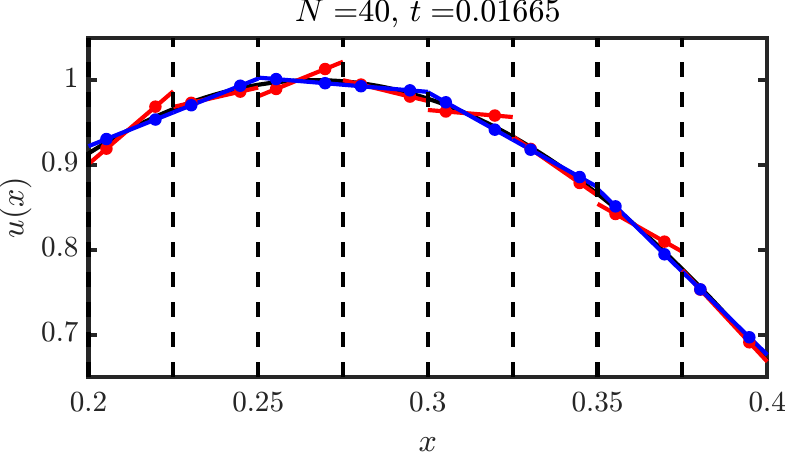}
         \label{subfig:mergingprocess}
     \end{subfigure}
     \hfill
     \begin{subfigure}[b]{0.45\textwidth}
         \centering
         \includegraphics[width=\textwidth]{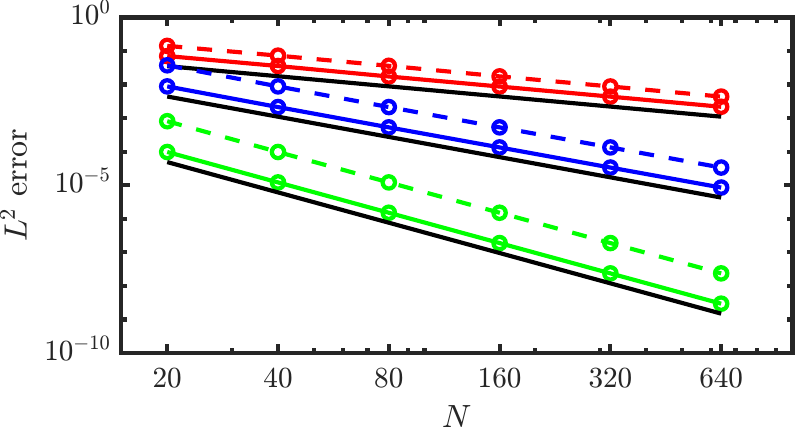}
         \label{subfig:LinearTransportOrder}
     \end{subfigure}
    \caption{
    The left panel shows the linear transport solution on a subset of $\Omega$ after one mesh-based filtering step, using explicit midpoint time stepping and linear polynomials. 
    The black line denotes the exact solution, the red lines the solutions advanced with time step $\widehat{\Delta t}$, and the blue lines the updates obtained from Equation~\eqref{eq:UpdatedNodalValues}. Markers indicate nodal values.
    The right panel shows the $L^2$ error of the linear transport solution at $t=1$ for the values $N$ tested. Solid and dashed lines correspond to solutions without and with mesh-based filtering, respectively. Red, blue, and green lines correspond to constant polynomials with Forward Euler, linear polynomials with the midpoint rule, and quadratic polynomials with SSPRK3, respectively. Black lines indicate first-, second-, and third-order reference slopes.
    }
    \label{fig:lineartransport_N40_N20}
\end{figure}

\subsubsection{Linear Transport Equation}
For the linear transport equation, we let $\Omega = [0,1]$, use periodic boundary conditions and set the initial condition as $u_0(x) = \sin(2\pi x)$.
The final time is $T = 1$, so that the exact solution equals the initial condition.
The time steps for a solution without merging and with merging, respectively, are
\[
	\Delta t = \mathrm{CFL}\,\min_{j}\{|I_j|\}
	\quad
	\text{and}
	\quad
	\widehat{\Delta t} = \mathrm{CFL}\,\min_{p}\{|\widehat{I}_p|\}.
\]
For the pairwise merging used here, $\widehat{\Delta t} = 2\Delta t$.
We set the value of $\mathrm{CFL}$ in accordance with Table 2.2 in \citet{cockburnShu_2001} for the corresponding polynomial degree and time integrator.

We perform the computation using DG methods with $k=0$, $1$, $2$.
The first-, second-, and third-order calculations use constant polynomials with Forward Euler time stepping, linear polynomials with explicit midpoint rule, and quadratic polynomials with SSPRK3 \citep{ShuOsher_1988} time stepping, respectively.
The right panel of Figure \ref{fig:lineartransport_N40_N20} shows that, for these first, second, and third order methods, we observe the expected order accuracy when using the mesh-based filtering process. 

The \(L^2\) errors of the DG solutions at \(t=1\) with $k=0$, $1$, $2$ are also reported in Table~\ref{tab:LinearTransport_k=0_p=1}.
The $L^2$ error of the fine-mesh solution is computed by evaluating $\|u_h - u\|_2$ using the Gauss quadrature rule defined by the DG nodes on $\cT$, while the error of the mesh-filtering solution is computed by evaluating $\|\widehat{\cP}[u_h]-u\|_2$ using the Gauss quadrature rule defined by the DG nodes on $\widehat{\cT}$.
In this way, the filtered computation with \(N\) fine cells has the same error as the fine-mesh computation with \(N/2\) cells.  
To ensure the errors are consistent with Theorem \ref{thm:SolutionEquivalence} we compute
    $\int_{I_{j}}\sin(2\pi x)\ell^{(j)}_{i}\,dx$
    and 
    $\int_{\widehat{I}_p}\sin(2\pi x)\hat{\ell}^{(p)}_{i}\,dx$
exactly when projecting the initial condition $u_0(x)$ onto the DG solution space. 
Table~\ref{tab:LinearTransport_k=0_p=1}, in the column labeled ``Fine Mesh+$\widehat{\Delta t}$'', also shows the result of evolving the fine-mesh DG solution with the larger time step \(\widehat{\Delta t}\) without applying the projection--prolongation filter.  
Because $\widehat{\Delta t}$ exceeds the fine-mesh CFL limit, the method is unstable and the solution blows up. 
In contrast, applying the filter after each update recovers the merged-mesh evolution described by Theorem~\ref{thm:SolutionEquivalence}, and yields the stable method with the larger time step \(\widehat{\Delta t}\).

The left panel of Figure \ref{fig:lineartransport_N40_N20} depicts the solution obtained using the mesh-based filtering process after one time step of a second order method using $N = 40$ cells.
The red lines represent the fine-mesh DG solution after one update with the larger time step \(\widehat{\Delta t}\), before applying the mesh-based filter as in Equation~\eqref{eq:UpdatedNodalValues}.
The blue lines represent the DG solution after applying Equation~\eqref{eq:UpdatedNodalValues}.
The mesh-based filter has a stabilizing effect on the solution, otherwise the solution would not stably evolve with $\widehat{\Delta t}$.
Additionally, since Equation~\eqref{eq:UpdatedNodalValues} projects the red line solution onto the merged mesh, we see that the blue line solution becomes continuous across cells within a merged cell.

\begin{table}
    \centering
    \begin{tabular}{crcrc}
         \tableline
         $N$ & Fine Mesh & Order & Mesh Filter & Order \\
         \tableline
         20  & 6.7083e-04 & --    & 2.1052e-03  & --    \\
         \tableline
         40  & 9.4263e-05 & 2.831 & 6.7083e-04 & 1.650 \\
         \tableline
         80  & 1.3420e-05 & 2.812 & 9.4263e-05 & 2.831 \\
         \tableline
         160 & 1.8127e-06 & 2.889 & 1.3420e-05 & 2.812 \\
         \tableline
         320 & 2.3663e-07 & 2.937 & 1.8127e-06 & 2.889 \\
         \tableline
         640 & 3.0264e-08 & 2.967 & 2.3663e-07 & 2.937 \\
         \tableline
    \end{tabular}
    \caption{$L^2$ error for Burgers' equation at $t = 0.5$ when employing a third-order method.
    The errors of the fine mesh and mesh filter solutions are compared. The error of the mesh-based filtering solution is computed by projecting the solution onto $\widehat{\cT}$.}
    \label{tab:Burgers_k=2_p=3}
\end{table}

\subsubsection{Burgers' Equation}
For the nonlinear Burgers' equation, we set the periodic domain as $\Omega = [-\pi,\pi]$, with initial condition $u_0(x) = 1/2 + \sin x$.
The Godunov numerical flux is used.
For this test we only evolve the solution using quadratic polynomials ($k=2$) and SSPRK3 time stepping.
The cells are merged as was done in the linear transport test. 
The time steps are
\[
	\Delta t = \mathrm{CFL}\,\min_{j}\left\{\frac{|I_j|}{\displaystyle\max_{x\in\Omega}|\bu_0(x)|}\right\}
	\quad
	\text{and}
	\quad
	\widehat{\Delta t} = \mathrm{CFL}\,\min_{p}\left\{\frac{|\widehat{I}_p|}{\displaystyle\max_{x\in\Omega}|\bu_0(x)|}\right\},
\]
where we set $\mathrm{CFL}=0.2$.
As before $\widehat{\Delta t} = 2\Delta t$.

For this initial condition, a shock forms at \(t=1\). Before shock formation, the solution remains smooth. 
Table \ref{tab:Burgers_k=2_p=3} reports the \(L^2\) error at \(t=0.5\), which is computed the same way as it was for the linear transport equation.
Like the fine-mesh solution without filtering, the mesh-based filter solution achieves third order accuracy with respect to the $L^2$ error.  
Again, we can observe from Table \ref{tab:Burgers_k=2_p=3} that the mesh-filtering solutions with $N$ cells have the same error as fine-mesh solutions with $N/2$ cells, which agrees with our conclusion in Theorem~\ref{thm:SolutionEquivalence}.
To ensure these errors are consistent with Theorem \ref{thm:SolutionEquivalence} we compute
    $\int_{I_{j}}(1/2+\sin x)\ell^{(j)}_{i}\,dx$
    and 
    $\int_{\widehat{I}_p}(1/2+\sin x)\hat{\ell}^{(p)}_{i}\,dx$
exactly when projecting the initial condition $u_0(x)$ onto the DG solution space.

To test the performance of the mesh-based filtering process when the solution becomes discontinuous, we also evolve the solution to time $t = 2$.
Figure~\ref{fig:BurgersN80} compares the fine-mesh and mesh-based filter solutions at \(t=2\) using \(N=80\) fine cells. 
A slope limiter is applied after each RK stage update, but before projecting and prolonging in the case of filtering, to control oscillations near the shock. 
The results show that the mesh-based filtering approach captures the solution through shock formation.
Visually, the solutions appear to differ only in the shock region, which is to be expected since the merging solution is resolving the shock on a coarser mesh.
This example also indicates that limiters can be combined with mesh-based filtering in practice, although, as noted above, applying a limiter independently on the fine mesh may invalidate the exact equivalence in Theorem~\ref{thm:SolutionEquivalence}.
As noted in Remark~\ref{rem:Limiters}, preserving the equivalence stated in Theorem~\ref{thm:SolutionEquivalence} in the presence of limiting requires applying the limiter on $\widehat{\cT}$ prior to prolonging to $\cT$.  
This modification may also eliminate the overshoot to the left of the shock observed in the filtered solution.
Further analysis is needed to carefully investigate the effects of combining slope or bound-enforcing limiters with the mesh-based filter.
However, in all of the numerical tests considered in this paper---including the two- and three-dimensional examples in Section~\ref{sec:Results}, which employ slope and bound-enforcing limiters---limiting does not appear to adversely affect the computed solutions.

Finally, while for these two simple tests the size of the merged cells was uniform and there was not a region where no merging occurs, we want to emphasize that this is not necessary.
In the two- and three-dimensional spherical-polar-coordinate computations presented in the next section, cells are not merged uniformly.
The number of cells merged varies with the radius (2D) or both the radius and the polar angle (3D).

\begin{figure}
    \centering
    \centering
     \begin{subfigure}[b]{0.45\textwidth}
         \centering
         \includegraphics[width=\textwidth]{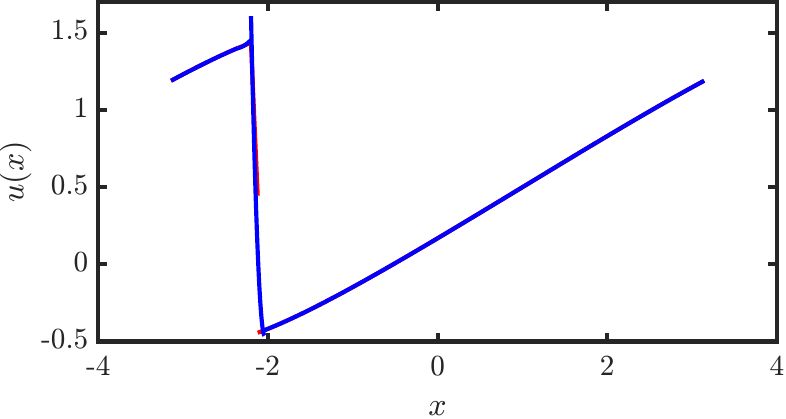}
         \label{subfig:BurgersResult}
     \end{subfigure}
     \hfill
     \begin{subfigure}[b]{0.45\textwidth}
         \centering
         \includegraphics[width=\textwidth]{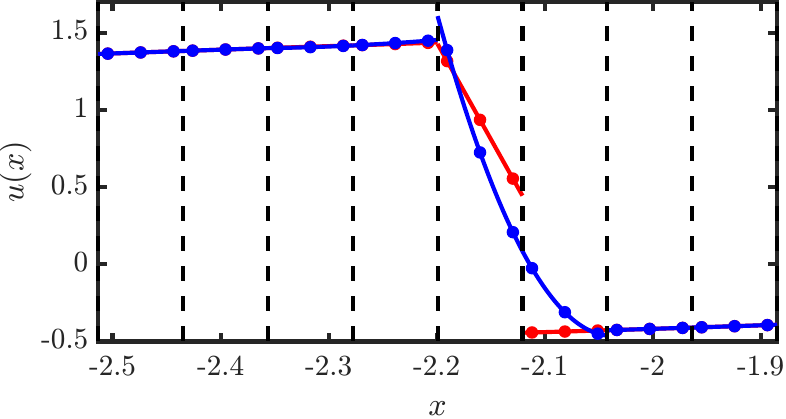}
         \label{subfig:BurgersShock}
     \end{subfigure}
    \caption{The left panel plots the solution without mesh-based filtering (red) and the solution with mesh-based filtering (blue) to the Burgers' equation at $t = 2$ using $N = 80$ cells.
    The right panel zooms in on the shock region of the solution in the left panel.
    The markers are plotted at the nodal values, and the dashed vertical lines indicate fine-cell boundaries.
    }
    \label{fig:BurgersN80}
\end{figure}

\section{Mesh-Based Filtering in Two and Three Dimensions}
\label{sec:2D3Dmerging}

In this section, we extend the one-dimensional mesh-based filtering procedure to spherical-polar meshes in two and three dimensions. 
One new ingredient is the presence of geometric volume factors in spherical coordinates.
We discuss the construction of the merged mesh $\widehat{\cT}$ and the projection--prolongation operators used in the filtering step.

To demonstrate the application of the mesh-based filter, we focus on solving the nonrelativistic Euler equations, written for general curvilinear coordinates as \citep[e.g.,][]{Pochik_2021}
\begin{subequations}
\label{eq:nonrelativisticEuler}
\begin{align}
    \p_t\rho+\frac{1}{\sqrt{\gamma}}\,\p_i(\sqrt{\gamma}\,\rho v^i) &= 0,\\
    \p_t(\rho v_j) + \frac{1}{\sqrt{\gamma}}\,\p_i(\sqrt{\gamma}\,\Pi^i_{\hspace{4pt}j}) &= \frac{1}{2}\,\Pi^{ik}\,\p_j{\gamma_{ik}},\\
    \p_tE+\frac{1}{\sqrt{\gamma}}\,\p_i(\sqrt{\gamma}\,[E+p]\,v^i) &= 0,
\end{align}
\end{subequations}
where $\rho$ is the mass density, $v^{i}$ components of the three-velocity, $\Pi^i_{\hspace{4pt}j} = \rho v^iv_j+p\,\delta^{i}_{\hspace{4pt}j}$ the stress tensor, $p$ the pressure, $E=e+\frac{1}{2}\rho v^{i}v_{i}$ the total energy density, and $e$ the internal energy density.  
We employ an ideal gas equation of state, $p=(\Gamma-1)\,e$, where $\Gamma$ is the ratio of specific heats.  
Einstein's summation is used where repeated Latin indices imply summation from $1$ to $3$.
We will use $\bu = (\rho,\rho v_j,E)$ to denote the solution vector for all evolved quantities.
The spatial metric components are denoted by $\gamma_{ij}$, and $\sqrt{\gamma}=\sqrt{\det[\gamma_{ij}]}$ is the square root of metric determinant, also referred to as the geometric volume factor.  
The spatial metric and its inverse $\gamma^{ij}$ are used to lower and raise indices on tensors, e.g., $v_{i}=\gamma_{ij}v^{j}$.  
For orthogonal curvilinear coordinates, $\gamma_{ij}$ is diagonal, and for spherical-polar coordinates
\begin{equation}
	\gamma_{11}=1, \quad \gamma_{22}=r^2, \quad \gamma_{33}=r^2\sin^2\theta, \quad \text{and} \quad \sqrt{\gamma}=r^2\sin\theta.
\end{equation}
We also introduce the scale factors $\mathsf{h}_{1}=1$, $\mathsf{h}_{2}=r$, and $\mathsf{h}_{3}=r\sin\theta$, and note that the following relations hold $\gamma_{11}=\mathsf{h}_{1}\mathsf{h}_{1}$, $\gamma_{22}=\mathsf{h}_{2}\mathsf{h}_{2}$, and $\gamma_{33}=\mathsf{h}_{3}\mathsf{h}_{3}$.  

For the nonrelativistic Euler equations, the timestep constraints are as follows in spherical-polar coordinates $(r,\theta,\phi)$
    \begin{alignat}{4}
    (\Delta t)_r &= \mathrm{CFL}\frac{\Delta r}{\lambda^{1}},\label{eq:timestep_r}\\
    (\Delta t)_\theta &= \mathrm{CFL}\frac{r\Delta\theta}{\lambda^{2}}&&\approx(\Delta t)_r\Delta\theta \quad &&\text{if} \quad &&r\approx\Delta r,\label{eq:timestep_theta}\\
    (\Delta t)_\phi &= \mathrm{CFL}\frac{r\sin\theta\Delta\phi}{\lambda^{3}}&&\approx(\Delta t)_r\Delta\theta\Delta\phi \quad &&\text{if} \quad &&r\approx\Delta r \quad \text{and} \quad \sin\theta\approx\Delta\theta.
    \label{eq:timestep_phi}
\end{alignat}
Here \(\lambda^{i}\) denote representative upper bounds on the relevant physical wave speeds. 
The timestep $\Delta t$ is then taken as
\begin{equation}
\Delta t = \min\{(\Delta t)_r, (\Delta t)_\theta, (\Delta t)_\phi\}.
\end{equation}
Near the origin and, in three dimensions, near the polar axis, the time step sizes \((\Delta t)_\theta\) and \((\Delta t)_\phi\) can be much smaller than \((\Delta t)_r\) since $\Delta\theta\ll 1$ and $\Delta\phi\ll 1$.
The goal of mesh-based filtering is to merge cells, first in the $\theta$-direction and then in the $\phi$-direction, so that $(\Delta t)_\theta$ and $(\Delta t)_\phi$ become similar in size to $(\Delta t)_r$.
Then it follows that $\Delta t \approx (\Delta t)_r$.

\subsection{Two-Dimensional Merging}
In two dimensions, we consider an $N_r\times N_\theta$ mesh, where $N_r$ and $N_\theta$ are the number of cells in the $r$-direction and $\theta$-direction respectively.
We let $r\in[0,R]$ and $\theta\in[0,\pi]$.
For simplicity, we assume the cells are uniformly spaced in each dimension, so that $\Delta r = R/N_r$ and $\Delta\theta = \pi/N_\theta$.
Then
\[
    \cT =
    \left\{
    I_i\times J_j: i=1,\ldots,N_r,\quad j=1,\ldots,N_\theta
    \right\}.
\]
On each element \(I_i\times J_j\), we express the numerical solution as
\begin{equation}
    \bu_h(t,r,\theta)\big|_{I_i\times J_j} =
    \sum_{\alpha=1}^{k+1}\sum_{\beta=1}^{k+1}u^{(ij)}_{\alpha\beta}(t)\ell^{(i)}_{\alpha}(r)\ell^{(j)}_{\beta}(\theta).
\end{equation}

As to the merged mesh, for each $i$, the $\theta$-direction of the strip $\cup_{j=1}^{N_{\theta}}I_i\times J_j$ will be discretized into $1\leq\hat{N}_{\theta}(i)\leq N_{\theta}$ merged cells.
We will denote merged cells in the $\theta$-direction as
\[
	\hat{J}_p(i) = \bigcup_{m\in B_{p}(i)}J_{m},
\]
where for each $p$ there is an indexed set $B_p(i)\subset\{1,\ldots,N_{\theta}\}$ which collects the fine cells $J_m$ comprising $\hat{J}_p(i)$.
The cells $J_{m}$ must be contiguous in the $\theta$-direction of the strip.
We define $|B_p(i)| := M_\theta(i,p)$ as the number of fine angular cells comprising the $p$-th merged cell at the level of $I_i$.
In general, the only requirement is that for each $i$,
\begin{equation}\label{eq:sumMtheta}
    \sum_{p=1}^{\hat{N}_{\theta}} M_\theta(i,p) = N_\theta.
\end{equation}
The two-dimensional merged mesh is then,
\[
    \widehat{\cT} =
    \left\{
    I_i\times\widehat J_p(i): i=1,\ldots,N_r,\quad p=1,\ldots,\widehat N_\theta(i)
    \right\}.
\]
On a merged element \(I_i\times\widehat J_p(i)\), we express the numerical solution as
\begin{equation}
    \hat{\bu}_h(t,r,\theta) \big|_{I_i\times\widehat J_p(i)} =
    \displaystyle\sum_{\alpha=1}^{k+1}\sum_{\beta=1}^{k+1}\hat{u}^{(ip)}_{\alpha\beta}(t)\ell^{(i)}_{\alpha}(r)\hat{\ell}^{(p)}_{\beta}(\theta),
\end{equation}
where $\widehat\ell_\beta^{(i,p)}$ is the nodal basis function on $\widehat J_p(i)$.
We will now discuss how we determine $M_\theta(i,p)$ and the cells $J_{m}$ comprising a merged cell.

\subsubsection{$\theta$-Direction Merging Criterion}

If we look at the timestep determined by $\cT$, then for $r<\Delta r/\Delta\theta$, Equation~\eqref{eq:timestep_theta} will result in the more restrictive timestep, while for $r>\Delta r/\Delta\theta$ Equation~\eqref{eq:timestep_r} is the more restrictive timestep.
To then avoid having Equation~\eqref{eq:timestep_theta} determine the timestep, we choose the angular merge factor so that the merged angular length is comparable to the radial length.

For each $I_i$, we merge cells in the $\theta$-direction in the following way
\begin{equation}
\label{eq:theta_merge_criteria}
    \Delta\hat{\theta}(i,p)=M_\theta(i,p)\Delta\theta,
    \quad
    \text{where}
    \quad
    M_\theta(i,p)=\argmin_{\bbN}\left(rM_\theta(i,p)\Delta\theta>\Delta r, \quad \forall r\in I_i\right).
\end{equation}
If \(M_\theta(i,p)=1\), no angular merging is performed at the level $I_i$.
For simplicity we force $N_r$ and $N_\theta$ to be powers of $2$, and for each $i$, we also force $M_\theta(i,p)$ to be constant with respect to $p$, in order that merged cells are uniform at each level of $I_i$.
By Equation~\eqref{eq:sumMtheta}, $N_\theta = \hat{N}_\theta M_{\theta}(i,p)$, which indicates $\hat{N}_{\theta}$ and $M_\theta(i,p)$ are also powers of $2$.
Note that the new cell width in the $\theta$-direction, $\Delta\hat{\theta}(i,p)$, depends on $r$, and decreases as $r$ increases. 
For $r>\Delta r/\Delta\theta$, we have $\Delta\hat{\theta}(i,p)=\Delta\theta$.
The top row of Figure \ref{fig:2DCoordinateGrid} depicts the result of Equation~\eqref{eq:theta_merge_criteria} when applied to a $32\times 32$ mesh.
Its left panel shows the result in coordinate space, while the right panel maps the coordinate space into the physical space.

\subsubsection{Mesh-Based Filter in Two Dimensions}
The appearance of geometry terms in Equation~\eqref{eq:nonrelativisticEuler} requires us to make a slight modification to the projection operator we introduced in Section \ref{sec:1DCellMerging}.
Preliminary testing of the mesh-based filter demonstrated it was necessary to include $\sqrt{\gamma}$ in the projection operator for the purposes of mass and energy conservation.
The projection, $\cP$, onto the fine element $I_i\times J_j\in\cT$ is now given by
\begin{equation}
\label{eq:2Dprolongation}
    2\pi\int_{I_i}\int_{J_j}\cP[u]\ell_\alpha(r)\ell_\beta(\theta)\sqrt{\gamma}\,dr\,d\theta
    =
    2\pi\int_{I_i}\int_{J_j}u\ell_\alpha(r)\ell_\beta(\theta)\sqrt{\gamma}\,dr\,d\theta,
\end{equation}
for all \(\alpha,\beta=1,\ldots,k+1\).

On each merged element, if the exact metric factor \(\sqrt{\gamma}\) were used with the exact integration, the merged projection could be defined using \(\sqrt{\gamma}\) on both sides. In the implementation with quadrature rules, however, metric quantities are represented on the nodes on which the solution is represented.
The nodal values of $\sqrt{\gamma}$ are computed using polynomial representations of $\mathsf{h}_1, \mathsf{h}_2$, and $\mathsf{h}_3$ as described in Section 3.1.1 of \citet{Pochik_2021}.
We therefore define a merged-mesh representation \(\sqrt{\widehat\gamma}\) by matching its moments against the merged basis functions.
On a merged element \(I_i\times\widehat J_p(i)\), we define \(\sqrt{\widehat\gamma}\in\bbV^{k}_{h}\) by
\begin{equation}
\label{eq:2Dprojection_gamma}
    2\pi\int_{I_i}\int_{\hat{J}_p(i)}\ell_\alpha(r)\hat{\ell}_\beta(\theta)\sqrt{\widehat{\gamma}}\,dr\,d\theta
    = 2\pi\int_{I_i}\int_{\hat{J}_p(i)}\ell_\alpha(r)\hat{\ell}_\beta(\theta)\sqrt{\gamma}\,dr\,d\theta.
\end{equation}
The nodal values for $\sqrt{\widehat{\gamma}}$ on the mesh $\widehat{\cT}$ can be computed using Equation~\eqref{eq:localMergedNodalValue} and replacing $\widehat{u}$ and $u$ respectively with $\sqrt{\widehat{\gamma}}$ and $\sqrt{\gamma}$.
Then the projection, $\widehat{\cP}$, onto the merged element $I_i\times\hat{J}_p(i)\in\widehat{\cT}$ is given by
\begin{equation}
\label{eq:2Dprojection}
    2\pi\int_{I_i}\int_{\hat{J}_p(i)}\widehat{\cP}[u]\ell_\alpha(r)\hat{\ell}_\beta(\theta)\sqrt{\widehat{\gamma}}\,dr\,d\theta
    =    2\pi\int_{I_i}\int_{\hat{J}_p(i)}u\ell_\alpha(r)\hat{\ell}_\beta(\theta)\sqrt{\gamma}\,dr\,d\theta.
\end{equation}

Then as in one-dimension, for any fine element $I_i\times J_j\subset I_i\times\hat{J}_p(i)$, the projection--prolongation updated solution of $\bu_h$ on that element is given by
\begin{equation}\label{eq:ProjectionProlongationUpdate2D}
    \bu_h\Big|_{I_i\times J_j}
    =
    \widehat{\cP}[\bu_h]\Big|_{I_i\times J_j}.
\end{equation}
Again, the idea is to project the solution onto $\widehat{\cT}$ and follow this with a prolongation onto $\cT$.
The presence of $\sqrt{\hat{\gamma}}$ in the projection slightly complicates the details of the mesh-based filtering process.
In practice, since $\sqrt{\gamma}$ does not change in time for the nonrelativistic Euler equations, $\sqrt{\hat{\gamma}}$ can easily be computed and stored after $\widehat{\cT}$ is determined.

\begin{figure}
     \centering
     \begin{subfigure}[b]{0.45\textwidth}
         \centering
         \includegraphics[width=\textwidth]{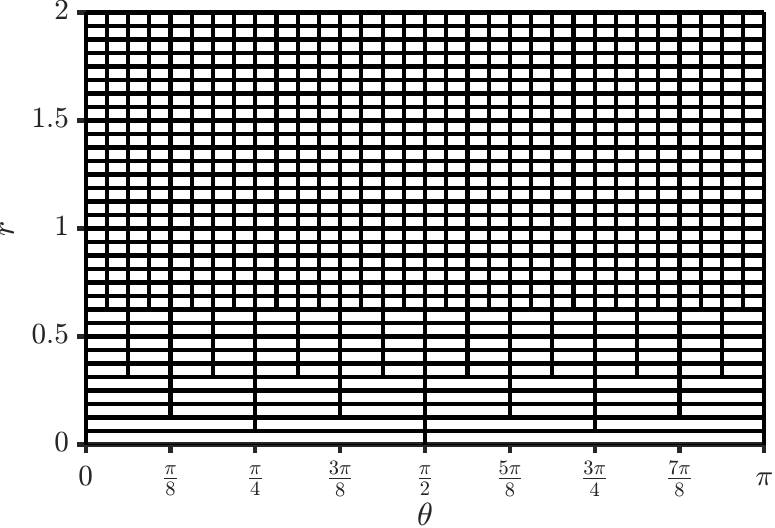}
     \end{subfigure}
     \begin{subfigure}[b]{0.45\textwidth}
         \centering
         \includegraphics[scale=0.75]{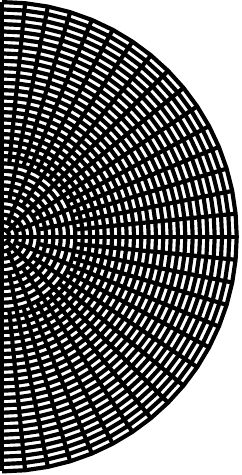}
     \end{subfigure}
     \begin{subfigure}[b]{0.23\linewidth}
        \centering
        \includegraphics[width=\textwidth]{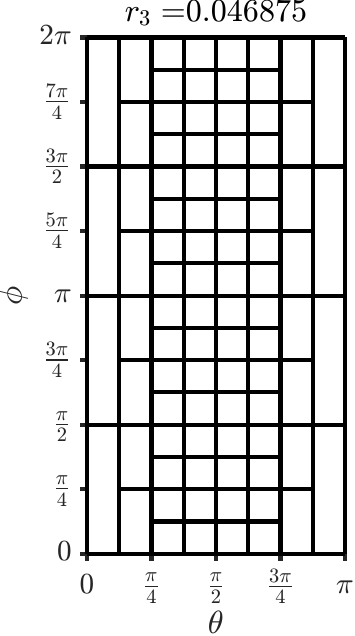}
    \end{subfigure}
    \hfill
    \begin{subfigure}[b]{0.23\linewidth}
        \centering
        \includegraphics[width=\textwidth]{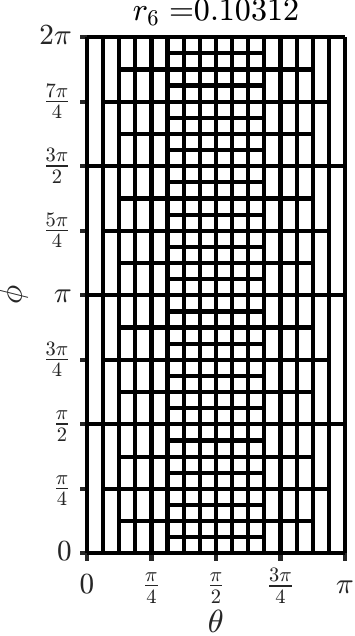}        
    \end{subfigure}
    \hfill
    \begin{subfigure}[b]{0.23\linewidth}
        \centering
        \includegraphics[width=\textwidth]{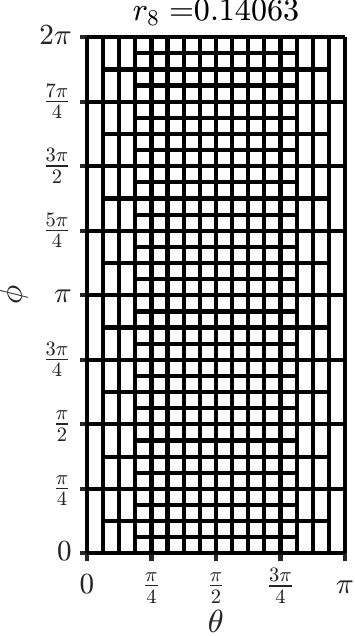}        
    \end{subfigure}
    \hfill
    \begin{subfigure}[b]{0.23\linewidth}
        \centering
        \includegraphics[width=\textwidth]{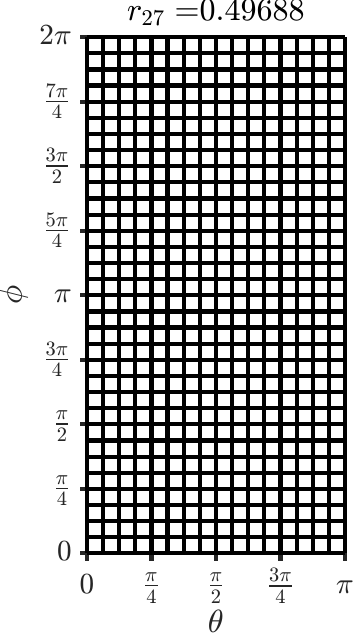}        
    \end{subfigure}
        \caption{Top row: the result of applying Equation~\eqref{eq:theta_merge_criteria} to a $32\times 32$ mesh in coordinate space (left) and the same mesh transformed into the physical space (right).
        Bottom row: depicted are $\theta$-$\phi$ coordinate meshes for four different slices of $r$ for a $64\times16\times 32$ mesh.
        The slices of $r$ from left to right are from the third, sixth, eighth, and twenty-seventh radial shells.
        }
        \label{fig:2DCoordinateGrid}
\end{figure}

\subsection{Three-Dimensional Merging}
We now extend the construction to three dimensions.
Consider an $N_r\times N_\theta\times N_\phi$ mesh, where $N_r$, $N_\theta$, and $N_\phi$ are the number of cells in the $r$-,$\theta$-, and $\phi$-directions, respectively.
We let $r\in[0,R]$, $\theta\in[0,\pi]$, and $\phi\in[0,2\pi]$.
For simplicity, we again assume uniform spacing for cells, so that $\Delta r = R/N_r$, $\Delta\theta = \pi/N_\theta$, and $\Delta\phi = 2\pi/N_\phi$.
Then
\[
    \cT = \left\{
    I_i\times J_j\times K_l: i=1,\ldots,N_r,\quad j=1,\ldots,N_\theta,\quad l=1,\ldots,N_\phi
    \right\}.
\]
On each element \(I_i\times J_j \times K_l\), we express the numerical solution as
\begin{equation}
    \bu_h(r,\theta,\phi)|_{I_i\times J_j\times K_l} =
    \sum_{\alpha=1}^{k+1}\sum_{\beta=1}^{k+1}\sum_{\delta=1}^{k+1}u^{(ijl)}_{\alpha\beta\delta}(t)\ell^{(i)}_{\alpha}(r)\ell^{(j)}_{\beta}(\theta)\ell^{(l)}_{\delta}(\phi).
\end{equation}

As to the merged mesh, we first merge in the $\theta$-direction as is done in two dimensions.
This gives the intermediate mesh
\[
	\bigcup_{i=1}^{N_r}\bigcup_{p=1}^{\hat{N}_\theta(i)}\bigcup_{l=1}^{N_\phi}I_i\times\hat{J}_p(i)\times K_l.
\]
Merging in the $\phi$-direction is then applied to this mesh to arrive at $\widehat{\cT}$.
For each pair $(i,p)$, the $\phi$-direction of the strip $\cup_{i=1}^{N_r}\cup_{p=1}^{\hat{N}_{\theta}(i)}I_i\times \hat{J}_{p}(i)\times K_l$ will be discretized into $1\leq \hat{N}_{\phi}(i,p)\leq N_{\phi}$ merged cells.
We will denote the merged cells in the $\phi$-direction as
\[
	\hat{K}_q(i,p) = \bigcup_{n\in C_q(i,p)}K_{n},
\]
where for each pair $(i,p)$ there is an indexed set $C_q(i,p)\subset\{1,\ldots,N_{\phi}\}$ which collects the fine cells $K_n$ comprising $\hat{K}_q(i,p)$.
The cells $K_{n}$ must be contiguous in the $\phi$-direction of the strip.
We define $|C_q(i,p)| := M_{\phi}(i,p,q)$ as the number of fine cells comprising the $q$-th merged cell at the level of $I_i\times\hat{J}_{p}(i)$.
In general, the only requirement is that for each pair $(i,p)$,
\begin{equation}\label{eq:sumMphi}
    \sum_{q=1}^{\hat{N}_{\phi}(i,p)}M_{\phi}(i,p,q) = N_{\phi}.
\end{equation}
The three-dimensional merged mesh is then,
\[
    \widehat{\cT} = \left\{ I_i\times\widehat J_p(i)\times\widehat K_q(i,p): i=1,\ldots,N_r,\quad p=1,\ldots,\widehat N_\theta(i),\quad q=1,\ldots,\widehat N_\phi(i,p)
    \right\}.
\]
On a merged element $I_i\times\hat{J}_{p}(i)\times\hat{K}_{q}(i,p)$, we express the numerical solution as
\begin{equation}
    \hat{\bu}_h(r,\theta,\phi) \big|_{I_i\times\widehat J_p(i)\times\widehat K_q(i,p)} =
    \displaystyle\sum_{\alpha=1}^{k+1}\sum_{\beta=1}^{k+1}\sum_{\delta=1}^{k+1}\hat{u}^{(ipq)}_{\alpha\beta\delta}(t)\ell^{(i)}_{\alpha}(r)\hat{\ell}^{(p)}_{\beta}(\theta)\hat{\ell}^{(q)}_{\delta}(\phi).
\end{equation}
We will now discuss how we determine $M_{\phi}(i,p,q)$ and the cells $K_{n}$ comprising a merged cell.

\subsubsection{$\phi$-Direction Merging Criterion}
To avoid the restrictive timestep from the $\theta$-dimension we first merge in the $\theta$-direction using the criterion in Equation~\eqref{eq:theta_merge_criteria}.
After the $\theta$-direction merging has been determined, the $\phi$-direction time-step restriction is governed by $r\sin\theta\,\Delta\phi$. 
To then avoid having Equation~\eqref{eq:timestep_phi} determine the timestep, we choose the $\phi$-merge factor so that the merged $\phi$-direction length is comparable to the radial length.

For each $I_i\times\hat{J}_{p}(i)$, we use the following merging criterion for the $\phi$-direction
\begin{equation}
    \label{eq:phi_merge_criteria}
    \Delta\hat{\phi}(i,p,q)=M_{\phi}(i,p,q)\Delta\phi,
    \quad
    \text{where}
    \quad
    M_{\phi}(i,p,q)=\argmin_{\bbN}\left(r\sin\hat{\theta}M_{\phi}(i,p,q)\Delta\phi>\Delta r, \quad \forall r\in I_i, \hat{\theta}\in\hat{J}_{p}(i) \right).
\end{equation}
Here we have used $\hat{\theta}$ to indicate that the merging in the $\phi$-direction depends on the merging done in the $\theta$-direction.
In other words, $\sin\hat{\theta}$ should be evaluated based on merged cell widths in the $\theta$-direction.
For simplicity we again force $N_r$, $N_\theta$, and $N_\phi$ to be powers of $2$, and for each pair $(i,p)$, we also force $M_{\phi}(i,p,q)$ to be constant with respect to $q$, so that merged cells are uniform at each level of $I_i\times\hat{J}_{p}(i)$.
Then as a result of Equation~\eqref{eq:sumMphi}, $M_{\phi}(i,p,q)$ is also a power of $2$.
The bottom row of Figure \ref{fig:2DCoordinateGrid} depicts the $\theta$-$\phi$ coordinate space at select radial shells after applying Equations~\eqref{eq:theta_merge_criteria} and \eqref{eq:phi_merge_criteria} to a $64\times 16\times 32$ mesh.
The four radial shells depicted from left to right are the third, sixth, eighth, and twenty-seventh.
To explain how to read the figure, let us look at the leftmost mesh depicting the third radial shell.
The $16$ cells in the $\theta$-direction have been merged into $8$ cells according to Equation~\eqref{eq:theta_merge_criteria}.
Each of these eight merged cells merges independently in the $\phi$-direction according to Equation~\eqref{eq:phi_merge_criteria}:
the first cell merges the $\phi$-direction into $4$ cells; the second cell, into $8$ cells; the third cell, into $16$ cells, etc.

\subsubsection{Projection and Prolongation Operators in Three Dimensions}
The three-dimensional projection operators are the direct analogues of the two-dimensional ones. 
The projection, $\cP$, onto the fine element $I_i\times J_j\times K_l\in\cT$ is now given by
\begin{equation}
\label{eq:3Dprolongation}
    \int_{I_i}\int_{J_j}\int_{K_l}\cP[u]\ell_\alpha(r)\ell_\beta(\theta)\ell_\delta(\phi)\sqrt{\gamma}\,dr\,d\theta\,d\phi
    =
    \int_{I_i}\int_{J_j}\int_{K_l}u\ell_\alpha(r)\ell_\beta(\theta)\ell_\delta(\phi)\sqrt{\gamma}\,dr\,d\theta\,d\phi,
\end{equation}
for all \(\alpha,\beta,\delta=1,\ldots,k+1\).

On a merged element $I_i\times\hat{J}_p(i)\times\hat{K}_q(i,p)$, we first define the merged-mesh representation $\sqrt{\widehat\gamma}\in\bbV^{k}_{h}$ by
\begin{equation}
\label{eq:3Dprojection_gamma}
    \int_{I_i}\int_{\hat{J}_p(i)}\int_{\hat{K}_{q}(i,p)}\ell_\alpha(r)\hat{\ell}_\beta(\theta)\hat{\ell}_\delta(\phi)\sqrt{\hat{\gamma}}\,dr\,d\theta\,d\phi
    =
    \int_{I_i}\int_{\hat{J}_p(i)}\int_{\hat{K}_{q}(i,p)}\ell_\alpha(r)\hat{\ell}_\beta(\theta)\hat{\ell}_\delta(\phi)\sqrt{\gamma}\,dr\,d\theta\,d\phi.
\end{equation}
The nodal values of $\sqrt{\widehat{\gamma}}$ on the mesh $\widehat{\cT}$ are computed by extending Equation~\eqref{eq:localMergedNodalValue} to the setting where merging occurs in two dimensions.
Then the projection, $\widehat{\cP}$, onto the merged element $I_i\times\hat{J}_p(i)\times\hat{K}_q(i,p)$ is given by
\begin{equation}
\label{eq:3Dprojection}
    \int_{I_i}\int_{\hat{J}_p(i)}\int_{\hat{K}_{q}(i,p)}\widehat{\cP}[u]\ell_\alpha(r)\hat{\ell}_\beta(\theta)\hat{\ell}_\delta(\phi)\sqrt{\hat{\gamma}}\,dr\,d\theta\,d\phi
    =
    \int_{I_i}\int_{\hat{J}_p(i)}\int_{\hat{K}_{q}(i,p)}u\ell_\alpha(r)\hat{\ell}_\beta(\theta)\hat{\ell}_\delta(\phi)\sqrt{\gamma}\,dr\,d\theta\,d\phi.
\end{equation}
Then, for any element $I_i\times J_j\times K_l\subset I_i\times\hat{J}_p(i)\times\hat{K}_q(i,p)$ the projection--prolongation updated solution of $\bu$ on that element is given by
\begin{equation}\label{eq:ProjectionProlongationUpdate3D}
    \bu_h\Big|_{I_i\times J_j\times K_l}
    =
    \widehat{\cP}[\bu_h]\Big|_{I_i\times J_j\times K_l}.
\end{equation}
As in two dimensions, $\sqrt{\hat{\gamma}}$ is time independent, and is computed once through Equation~\eqref{eq:3Dprojection_gamma} after $\widehat{\cT}$ is determined.

\section{Numerical Tests}
\label{sec:Results}

The mesh-based filter has been implemented in the toolkit for high-order neutrino-radiation hydrodynamics \citep[\texttt{thornado};][]{endeve_etal_2019,Pochik_2021}.
The implementation supports two- and three-dimensional spherical-polar meshes and uses the projection--prolongation operators defined in Equations~\eqref{eq:ProjectionProlongationUpdate2D} and \eqref{eq:ProjectionProlongationUpdate3D}.  
Following the flowchart in Figure \ref{fig:MergingFlowchart}, the filter is applied to the initial condition and after every stage of the RK time integrator.
In addition, a slope limiter and a bound-enforcing limiter are used.  
We apply the slope limiter on $\cT$ before the mesh-based filter, allowing us to retain \texttt{thornado}'s existing structured-mesh implementation without modification.  
This ordering falls outside the conditions for the equivalence established in Theorem~\ref{thm:SolutionEquivalence}; consequently, stability properties of the RKDG scheme on the merged mesh, such as the TVD property \citep{cockburnShu_2001}, are not guaranteed to carry over to the filtered scheme.  
However, we observed no adverse effects in the numerical experiments considered here.  
Applying the slope limiter on $\widehat{\cT}$ would require modifying the limiter routine to access cell averages in neighboring merged elements.  
The bound-enforcing limiter is applied on $\widehat{\cT}$ before prolongation to $\cT$.
On each merged element, positivity is enforced at the union of nodes from its constituent fine-mesh elements, which we found necessary to avoid numerical failures.  
Because the limiter is element local, the only modification to \texttt{thornado}'s existing implementation was to allow the set of positivity-enforcement nodes to be supplied as input to the limiter.  
We have tested the mesh-based filter in \texttt{thornado} with the following three numerical tests to assess the accuracy, robustness, and time step improvements.  

\subsection{Two-dimensional Riemann Problem}

Here we consider a two-dimensional Riemann problem, aiming to quantify the tradeoff between effective angular resolution and time step size that comes with the mesh-based filter.  
We modify the initial condition of the Sod shock tube problem \citep{Sod_1978}---see, e.g., \citet{omang_etal_2006}, for numerical solutions in spherical symmetry---by introducing $\theta$-dependence so that the solution develops non-uniformly in $\theta$ and to observe associated errors due to  the mesh-based filter, which has no negligible effect on the solution in spherical symmetry.

The domain is $\Omega = [0,2]\times[0,\pi]$, and the initial conditions are
\begin{align}
        \rho(r,\theta) &= \begin{cases}
            1+0.5\sin^2(\theta) \quad &r\leq0.4,\\
            0.125 \quad &r>0.4,
        \end{cases}\qquad
        v_r(r,\theta)  = 
        v_\theta(r,\theta) = 0,\qquad
        p(r,\theta) = \begin{cases}
            1+0.5\sin^2(\theta) \quad &r\leq 0.4,\\
            0.1 \quad &r>0.4.
        \end{cases}
\end{align}
We impose reflecting boundary conditions at the inner and outer boundaries in both dimensions.  
Using linear elements, we employ four different meshes to quantify the error introduced by mesh-based filtering.  
Three meshes, with resolutions $N_{r}\times N_{\theta}=128\times 16$, $128\times 32$, and $128\times 64$, are merged according to Equation~\eqref{eq:theta_merge_criteria}, and the fourth mesh is a relaxed $128\times 64$-mesh, where the cells are merged sub-optimally with respect to the time step, following the modified criteria
\begin{equation}
    \label{eq:theta_merge_criteria_relaxed}
    \Delta\hat{\theta}(i,p)=M_{\theta}(i,p)\Delta\theta,
    \quad
    \text{where}
    \quad
    M_{\theta}(i,p)=\argmin_{\bbN}\left(rM_{\theta}(i,p)\Delta\theta>\frac{\Delta r}{4}\right).
\end{equation}  
The merging region, $\Omega_2$, for all four meshes is depicted in Figure~\ref{fig:linearRiemannX2DependenceGrids}: Elements for which two or more fine cells have been merged are shaded.  
The effect of the relaxed merging criterion can be seen by comparing the two rightmost panels of the figure.  
The second to last panel is a $128\times 64$ mesh which merges cells according to Equation~\eqref{eq:theta_merge_criteria}, while the last panel is a $128\times 64$ mesh which merges cells according to Equation~\eqref{eq:theta_merge_criteria_relaxed}.
This relaxed criterion for merging results in only $3$ levels of merging in the $\theta$-dimension: one layer of $8$ cells, one layer of $16$ cells, three layers of $32$ cells, and the remaining layers are the original mesh with $64$ cells in the $\theta$ dimension.  
This makes the rightmost mesh structurally similar to the leftmost $128\times 16$ merged mesh.  
We choose the first three meshes to demonstrate that with mesh-based filtering the time step remains constant as the angular resolution is refined, while the last mesh is used to demonstrate the tradeoff between solution accuracy and time step size that comes with filtering.

\begin{figure}
    \centering
    \begin{subfigure}[b]{0.23\linewidth}
        \centering
        \includegraphics[width=\textwidth,trim=46 5 70 15,clip]{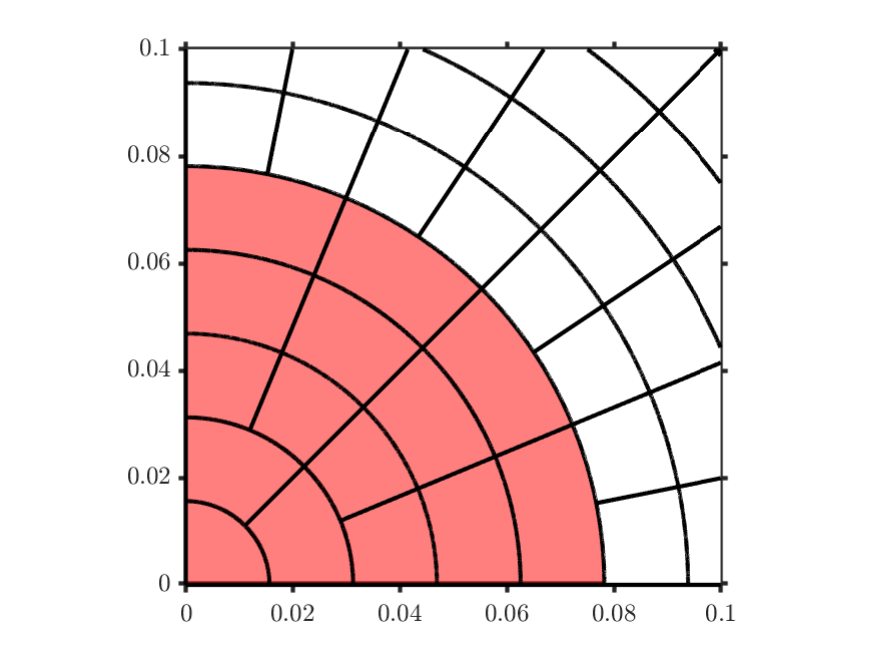}
    \end{subfigure}
    \hfill
    \begin{subfigure}[b]{0.23\linewidth}
        \centering
        \includegraphics[width=\textwidth,trim=55 5 70 15,clip]{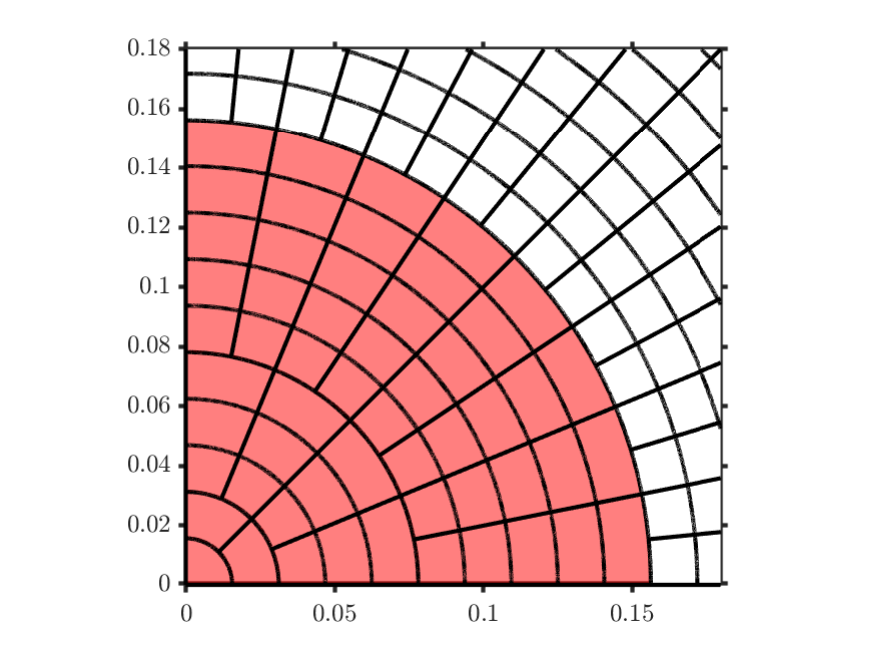}        
    \end{subfigure}
    \hfill
    \begin{subfigure}[b]{0.23\linewidth}
        \centering
        \includegraphics[width=\textwidth,trim=55 5 70 15,clip]{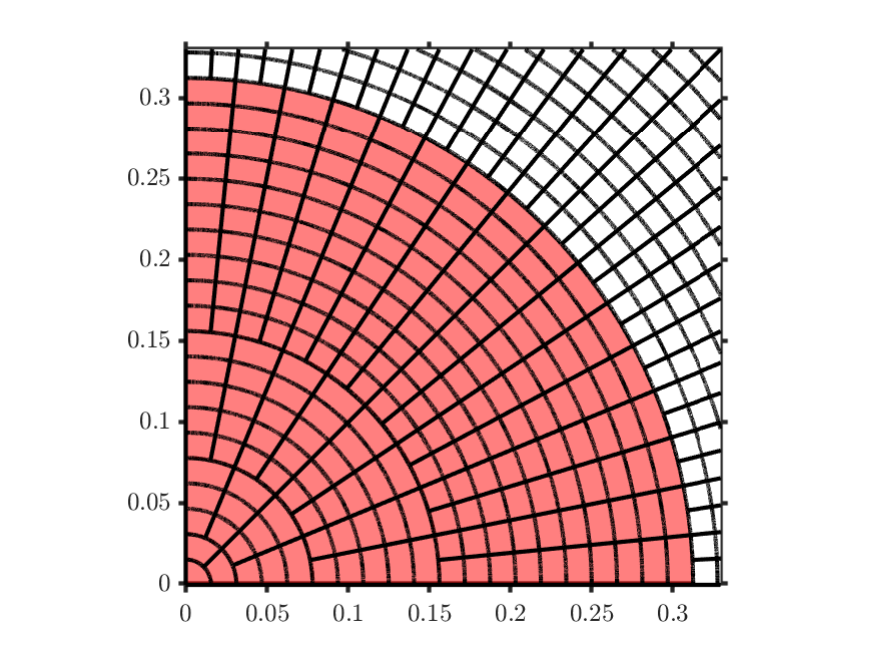}        
    \end{subfigure}
    \hfill
    \begin{subfigure}[b]{0.23\linewidth}
        \centering
        \includegraphics[width=\textwidth,trim=55 5 70 15,clip]{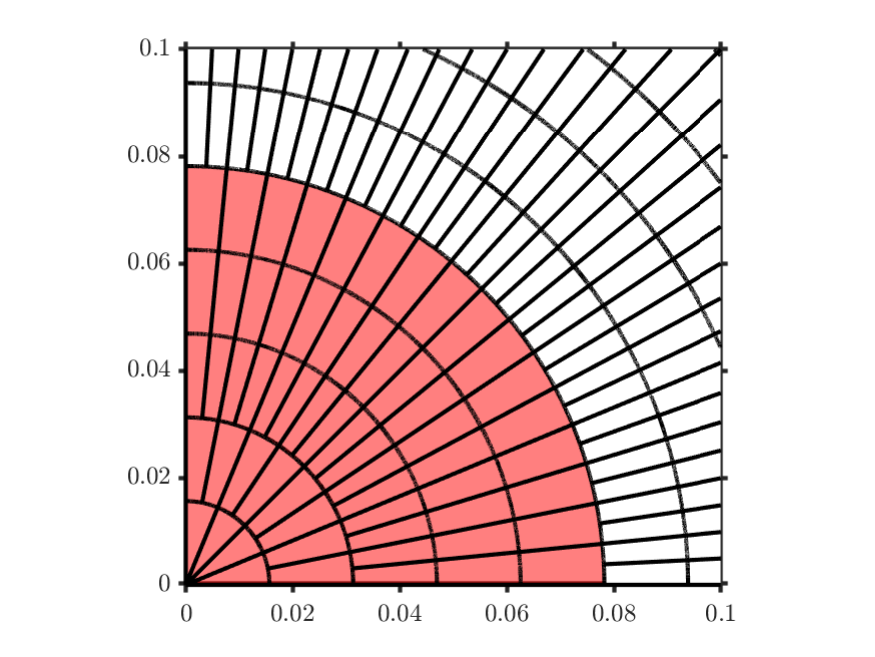}        
    \end{subfigure}
    \caption{The upper half ($0\leq\theta\leq\pi/2$) of the merging region, $\Omega_2$, of all four $\widehat{\cT}$ meshes used for the two-dimensional Riemann problem.
    Elements in which two or more fine cells have been merged together are shaded.  
    The first three meshes are merged according to Equation~\eqref{eq:theta_merge_criteria}, while the rightmost mesh is merged according to Equation~\eqref{eq:theta_merge_criteria_relaxed}.}
    \label{fig:linearRiemannX2DependenceGrids}
\end{figure}

Figure~\ref{fig:RS_128x64allgrids_video} shows radial profiles of mass density, pressure, radial velocity, and angular velocity at $t=2.5$ for the unmerged simulation (first row), the simulation merged according to Equation~\eqref{eq:theta_merge_criteria} (second row), and the simulation merged according to Equation~\eqref{eq:theta_merge_criteria_relaxed} (third row).
The animation of this figure (included with the online article) shows the evolution of the radial profiles.
The animation reveals how the $\theta$-dependence of the solutions develop over time.  
There is good agreement among the models outside the merging regions.  
The angular dependence becomes strong near the origin as the simulation approaches $t=2.5$.
The animation also shows the effects of waves reflecting off the inner and outer boundaries.
Reflections from the inner boundary occur at $t=0.3$ and $t=1.275$, while a reflection from the outer boundary occurs at $t=1.125$.  
At approximately $t=2.075$, the wave reflected from the inner boundary at $t=1.275$ collides with the wave reflected off the outer boundary at $t=1.125$.
At $t=2.5$, comparison of the unmerged simulation (first row) with the simulation merged according to Equation~\eqref{eq:theta_merge_criteria} (second row) shows noticeable differences in the angular variations of the mass density, radial velocity, and angular velocity within the merging region near the origin.
The simulation employing the relaxed merging criterion in Equation~\eqref{eq:theta_merge_criteria_relaxed} (third row) is in closer agreement with the unmerged simulation near the origin.

\begin{figure}
\begin{interactive}{animation}{RS_128x64_allgrids.mp4}
\includegraphics[width = \textwidth]{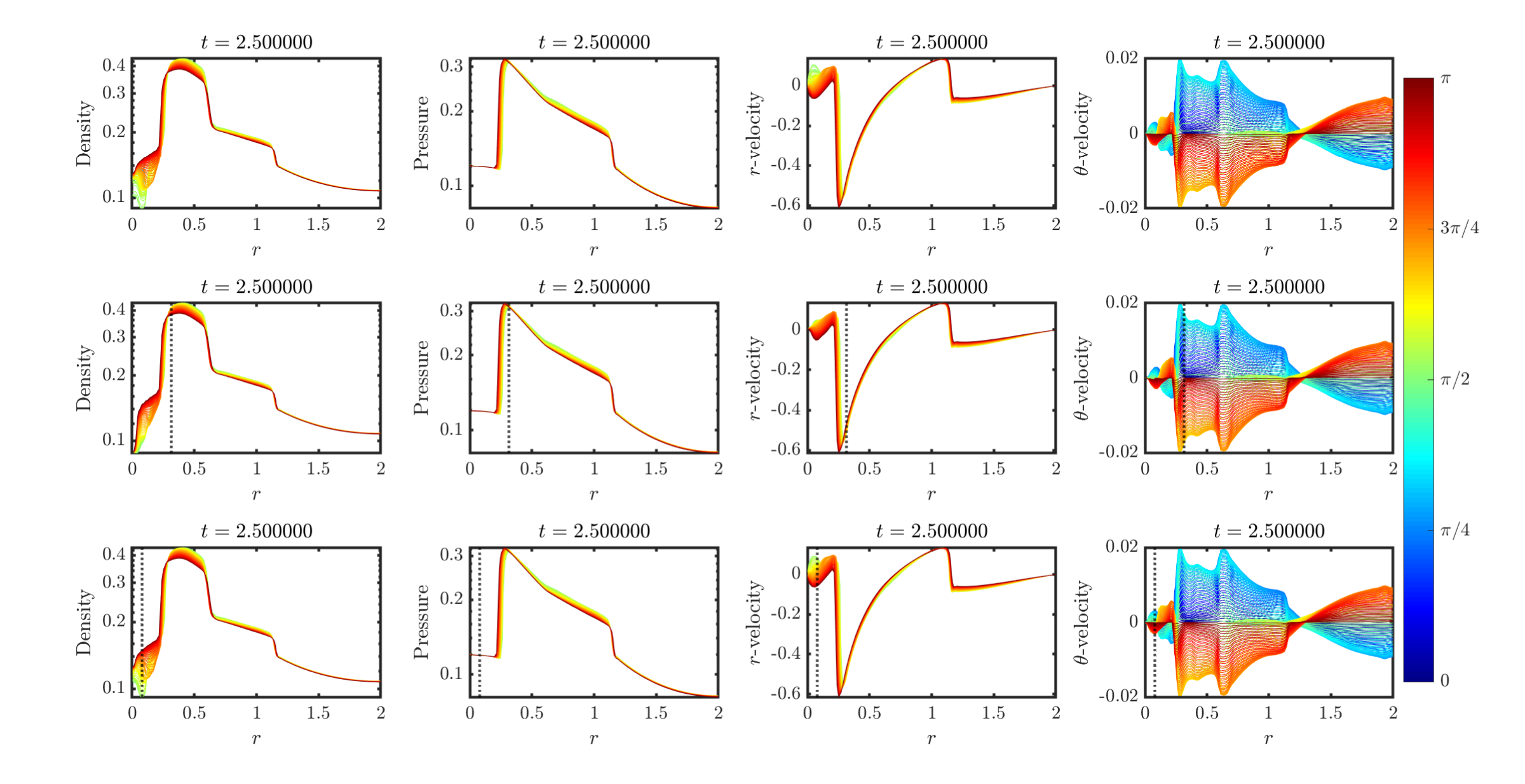}
\end{interactive}
\caption{
The four panels on each row plot radial lineouts of the density, pressure, $\theta$-velocity, and $r$-velocity for the two-dimensional Riemann problem.
The first row is the $128\times 64$ fine mesh solution, the second row is the $128\times 64$ mesh-filtering solution, and the last row is the $128\times 64$ mesh-filtering solution with relaxed cell merging.
The color scheme is used to indicate the value of $\theta$ for a radial lineout.
It ranges across the color spectrum from blue to red, where the darkest blue is the radial lineout for $\theta = 0.010373$ and the darkest red is for $\theta = 3.1312$.
The black dashed line indicates the boundary between merging and non-merging region of the mesh, approximately $r=0.08$ for the mesh-filtering solution, and approximately $r=0.315$ for the mesh-filtering solution with relaxed cell merging.
The animation of this figure ($10$~s; $t=0$ to $t=2.5$) illustrates the $\theta$-dependence of this solution.
The static figure corresponds to the solution at $t=2.5$.
}
\label{fig:RS_128x64allgrids_video}
\end{figure}

We compute the error on each mesh against an unfiltered $128\times 128$ reference solution, which we denote by $\bu_{\mathrm{ref}}$.
The error is computed using the following weighted $L^p$ norm defined over a region $R\subseteq\Omega$
\begin{equation}
    \|\cdot\|_{L^p_{\gamma}(R)} = \left(\frac{1}{|R|_\gamma}\iint\limits_{R}|\cdot|^p\sqrt{\gamma}\,dr\,d\theta\right)^{1/p}, 
    \qquad
    |R|_\gamma=\iint\limits_{R}\sqrt{\gamma}\,dr\,d\theta.
\end{equation}
Figure~\ref{fig:linearRiemannX2DependenceError} depicts $\|\rho_{\mathrm{ref}}-\rho_h\|_{L^p_{\gamma}(R)}$ (solid lines) and $\|\rho_{\mathrm{ref}}-\widehat{\rho}_h\|_{L^p_{\gamma}(R)}$ (dashed lines) versus time for all four meshes, where $\rho_h$ represents the mass density of the fine mesh DG solution and $\widehat{\rho}_h$ represents mass density of the mesh-based filtering DG solution.
The left column plots these quantities for $R = \Omega$, while the right column is for $R = \Omega_1$.
Recall $\Omega$ is the whole domain, while $\Omega_1$ is the non-merging region.
For $R = \Omega$, as $N_{\theta}$ increases, the error for both the fine mesh and mesh-based filtering solutions decreases.  
In this case, the error for the mesh-based filtering solution does not decrease as rapidly as the fine-mesh errors, resulting in a visual gap between the two errors.  
However the size of the mesh-filtering error in relation to the fine mesh solution is acceptable.  
At the coarsest resolution, the $L^{\infty}$ error is on the order of $10^{-2}$ or less for both solutions.  
Across all meshes, the $L^2$ error remains below $\sim10^{-3}$, and the $L^1$ error is $\sim10^{-4}$ or less for both solutions.
The mesh-filtering error can be improved if cells are not merged maximally.  
The bottom-left panel of Figure~\ref{fig:linearRiemannX2DependenceError} depicts the error on the alternate $128\times 64$ mesh.  
With this alternate mesh, the error over the entire domain is in better agreement with the fine-mesh error.  
To further demonstrate that the errors in the left column of Figure~\ref{fig:linearRiemannX2DependenceError} arise from the merging region, $\Omega_2$, in the right column we plot the corresponding errors versus time in the non-merging region, $\Omega_1$.  
We no longer see as noticeable deviations in the $L^2$ error on the $128\times 32$ and $128\times 64$ meshes when we restrict to $\Omega_1$.

The mesh-based filter trades angular resolution for larger time steps, with a corresponding potential increase in error.  
Figure~\ref{fig:RiemannCycles} illustrates its effect on the allowable time step for the two-dimensional Riemann problem.  
The left panel shows that, for fixed $N_{r}=128$, increasing $N_{\theta}$ from 16 to 64 using the merging criterion in Equation~\eqref{eq:theta_merge_criteria} does not increase the total number of time steps.  
For $N_{\theta}=64$, the filtered solution requires $\sim3.6\times10^{3}$ time steps, compared with $\sim6.3\times10^{4}$ for the fine-mesh solution.  
With the relaxed merging criterion, the $128\times 64$ mesh requires more time steps ($\sim10^{4}$) than with the standard criterion, but substantially fewer than the corresponding fine mesh.  
The right panel of Figure~\ref{fig:RiemannCycles} shows the time step as a function of time.  
With the mesh-based filter, the time step remains independent of $N_{\theta}$ for all times, whereas it is reduced by approximately a factor of two when the angular resolution of the fine mesh is doubled.  
The abrupt decrease near $t=1.3$ results from a corresponding increase in the sound speed, while the decrease at $t=2.5$ enforces the prescribed end time.  
Thus, the time step benefit of the mesh-based filter increases with angular resolution.  
Even with relaxed merging, the allowable time step remains larger than that on the coarsest fine mesh.  
Relaxing the merging criterion reduces the error at the cost of smaller time steps.  
This tradeoff is problem dependent and must be evaluated on a case-by-case basis.  
For the solutions considered here, angular variations near the origin are modest, and the numerical results suggest that the substantial increase in time step outweighs the modest increase in error.  

\begin{figure}
    \centering
    % \begin{subfigure}[b]{0.49\linewidth}
    %     \centering
    %     \includegraphics[width=\textwidth]{RS_128x16_error.pdf}        
    % \end{subfigure}
    % \hfill
    % \begin{subfigure}[b]{0.49\linewidth}
    %     \centering
    %     \includegraphics[width=\textwidth]{RS_128x32_error.pdf}        
    % \end{subfigure}
    % \\
    % \begin{subfigure}[b]{0.49\linewidth}
    %     \centering
    %     \includegraphics[width=\textwidth]{RS_128x64_error.pdf}        
    % \end{subfigure}
    % \hfill
    % \begin{subfigure}[b]{0.49\linewidth}
    %     \centering
    %     \includegraphics[width=\textwidth]{RS_128x64_altGrid_error.pdf}        
    % \end{subfigure}
    \begin{subfigure}[b]{0.49\linewidth}
        \centering
        \includegraphics[width=\textwidth]{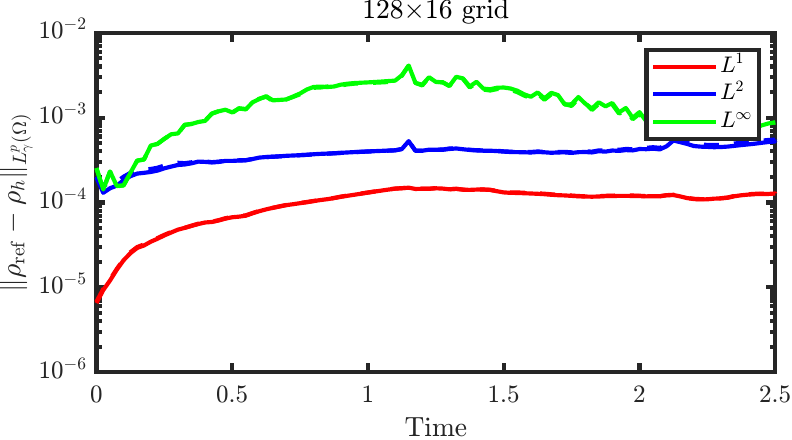}        
    \end{subfigure}
    \hfill
    \begin{subfigure}[b]{0.49\linewidth}
        \centering
        \includegraphics[width=\textwidth]{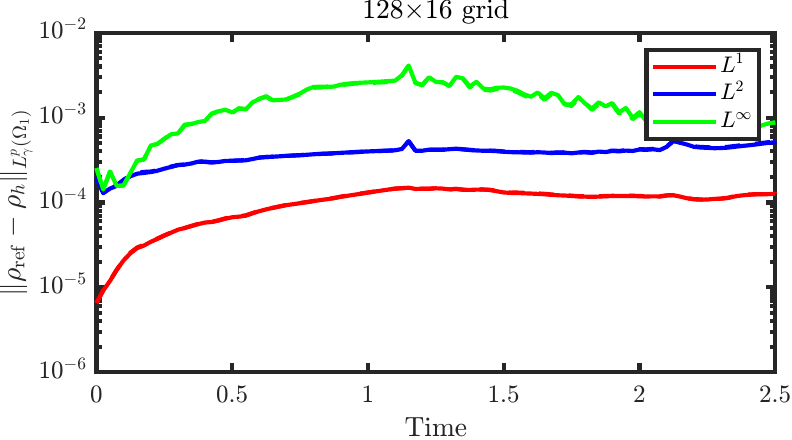}        
    \end{subfigure}
    \\
    \begin{subfigure}[b]{0.49\linewidth}
        \centering
        \includegraphics[width=\textwidth]{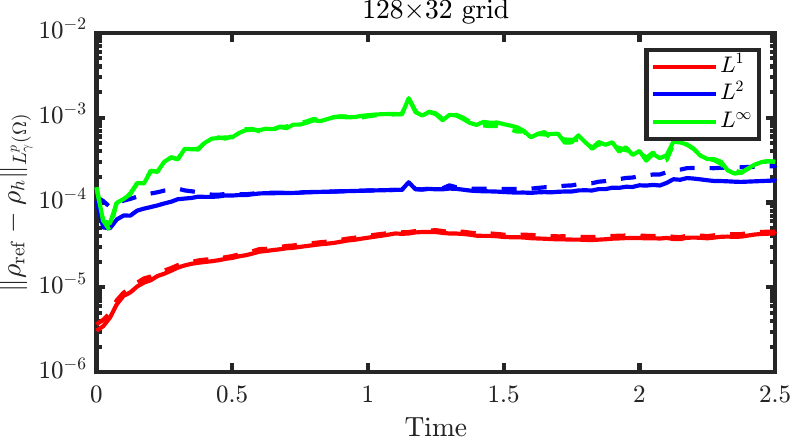}        
    \end{subfigure}
    \hfill
    \begin{subfigure}[b]{0.49\linewidth}
        \centering
        \includegraphics[width=\textwidth]{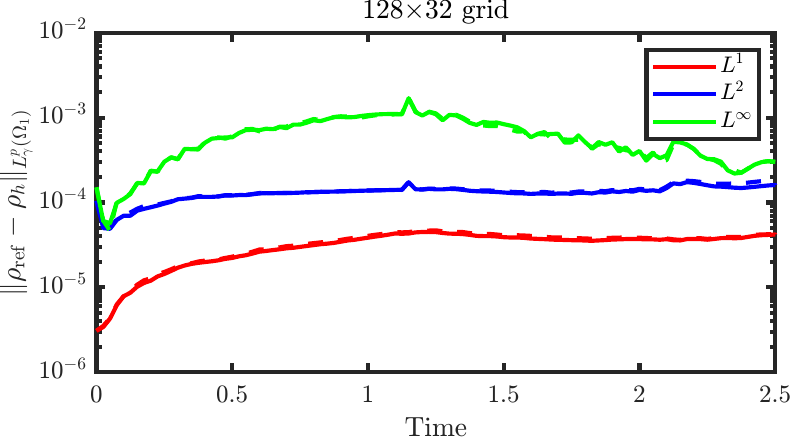}        
    \end{subfigure}
    \\
    \begin{subfigure}[b]{0.49\linewidth}
        \centering
        \includegraphics[width=\textwidth]{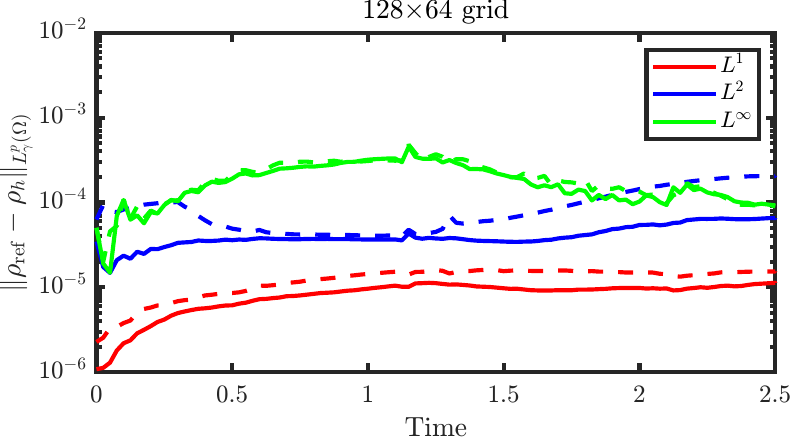}        
    \end{subfigure}
    \hfill
    \begin{subfigure}[b]{0.49\linewidth}
        \centering
        \includegraphics[width=\textwidth]{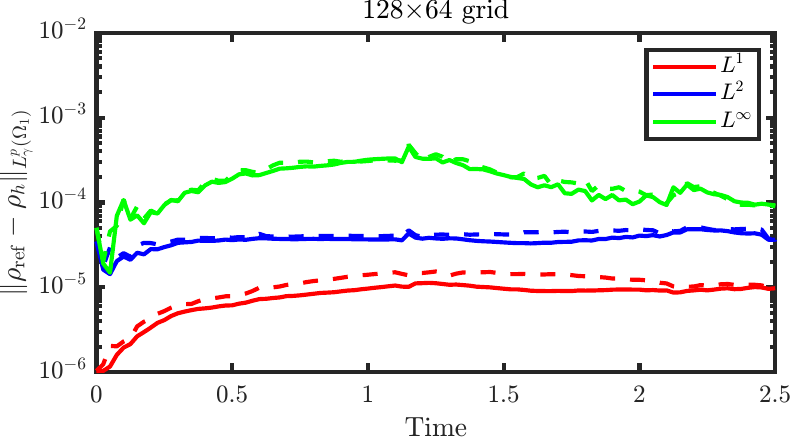}        
    \end{subfigure}
    \\
    \begin{subfigure}[b]{0.49\linewidth}
        \centering
        \includegraphics[width=\textwidth]{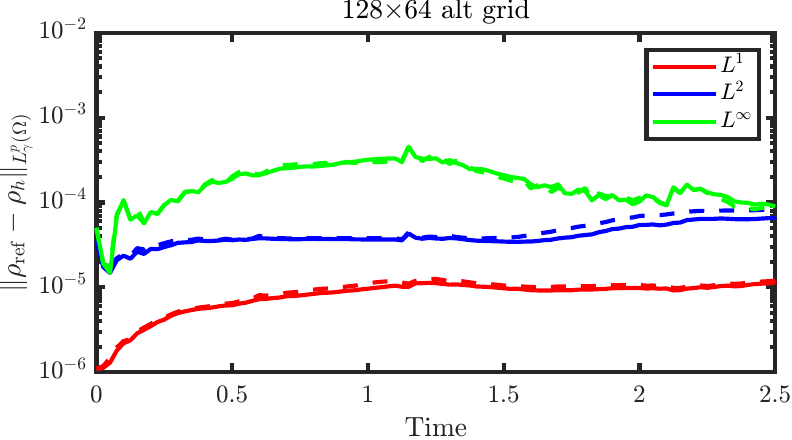}        
    \end{subfigure}
    \hfill
    \begin{subfigure}[b]{0.49\linewidth}
        \centering
        \includegraphics[width=\textwidth]{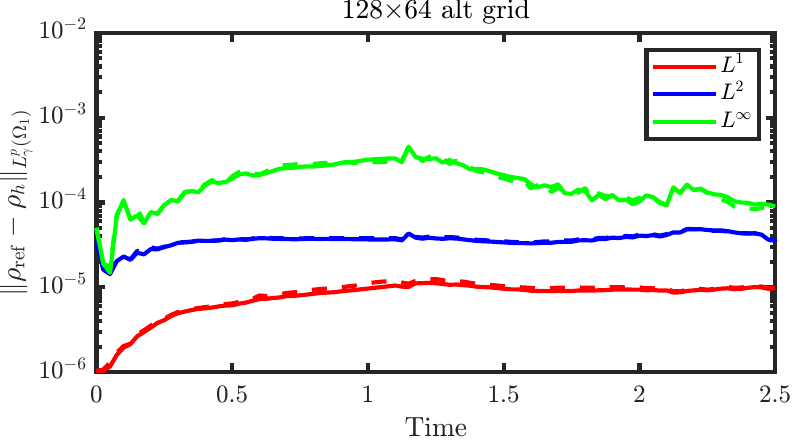}        
    \end{subfigure}
    \caption{Weighted $L^p$ error over time for the two-dimensional Riemann problem for fine-mesh (solid lines) and mesh-based filtering solutions (dashed lines) over time on $\Omega$ (left column) and $\Omega_1$ (right column) for all four meshes.
    Recall $\Omega_1$ is the union of all elements of $\widehat{\cT}$ that coincide with single elements of $\cT$
    The errors are calculated using a reference solution generated on a $128\times 128$ mesh.
    The bottom row is the result for the relaxed merging mesh.}
    \label{fig:linearRiemannX2DependenceError}
\end{figure}

% \begin{figure}
%     \centering
%     \begin{subfigure}[b]{0.49\linewidth}
%         \centering
%         \includegraphics[width=\textwidth]{RS_128x16_NMerror.pdf}        
%     \end{subfigure}
%     \hfill
%     \begin{subfigure}[b]{0.49\linewidth}
%         \centering
%         \includegraphics[width=\textwidth]{RS_128x32_NMerror.pdf}        
%     \end{subfigure}
%     \\
%     \begin{subfigure}[b]{0.49\linewidth}
%         \centering
%         \includegraphics[width=\textwidth]{RS_128x64_NMerror.pdf}        
%     \end{subfigure}
%     \hfill
%     \begin{subfigure}[b]{0.49\linewidth}
%         \centering
%         \includegraphics[width=\textwidth]{RS_128x64_altGrid_NMerror.pdf}        
%     \end{subfigure}
%     \caption{Weighted $L^p$ error over time for the two-dimensional Riemann problem for fine-mesh (solid lines) and mesh-based filtering solutions (dashed lines) over time on $\Omega_1$ for all four meshes.
%     The errors are calculated using a reference solution generated on a $128\times 128$ mesh.
%     The bottom right panel is the result for the relaxed merging mesh.}
%     \label{fig:linearRiemannX2DependenceNonmergingError}
% \end{figure}

\begin{figure}
    \centering
    \begin{subfigure}[b]{0.49\linewidth}
        \centering
        \includegraphics[width=\textwidth]{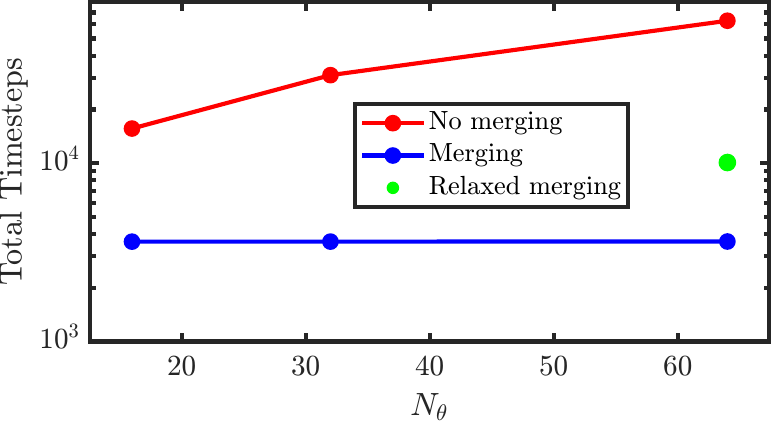}        
    \end{subfigure}
    \hfill
    \begin{subfigure}[b]{0.49\linewidth}
        \centering
        \includegraphics[width=\textwidth]{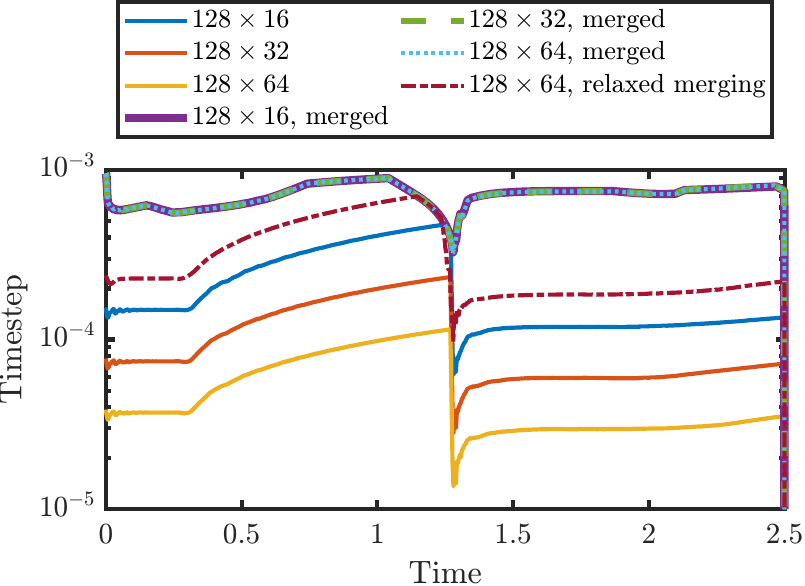}        
    \end{subfigure}
    \caption{Results for the two-dimensional Riemann problem. 
    The left panel plots the total number of time steps for $128\times N_\theta$ meshes.
    The blue line is for mesh-filtering solutions, while the red line is for fine mesh solutions.
    The green dots are for the $128\times64$ mesh with relaxed filtering.
    The right panel plots the size of the timestep versus time for the same meshes.}
    \label{fig:RiemannCycles}
\end{figure}

\subsection{Two-Dimensional Shock--Bubble Interaction}

The shock--bubble interaction test is another modification of Sod's shock tube problem.
Let $B_{0.25}(1,\pi/2)$ represent the ball of radius $0.25$ centered at the point $(r,\theta)=(1,\pi/2)$.  
The modifications include a low-density bubble inside $B_{0.25}(1,\pi/2)$, and a high-density region for $r\leq 0.1$. 
The initial conditions are given by
\begin{align}
    \rho(r,\theta) &= \begin{cases}
        100 \quad &r\leq0.1,\\
        1 \quad &0.1<r\leq 0.4,\\
        0.0125 \quad &(r,\theta)\in B_{0.25}((1,\pi/2)),\\
        0.125 \quad &\text{else,}
    \end{cases}\qquad 
    v_r(r,\theta)  = 
    v_\theta(r,\theta) = 0,\qquad
    p(r,\theta) = \begin{cases}
        1 \quad &r\leq 0.4,\\
        0.1 \quad &r>0.4.
    \end{cases}
\end{align}
We impose reflecting boundary conditions at the inner and outer boundaries for both dimensions.
We test the problem on meshes of increasing resolution using linear elements and SSPRK3 time stepping.
Specifically, the mesh resolutions tested are $64\times 64$, $128\times 128$, and $256\times 256$, however we only show results from the $256\times 256$ simulation.

\begin{table}
    \centering
    \begin{tabular}{lccccc}
        \hline
         &Min &Median &Mean &Standard Deviation &Max  \\
         \hline
         $t = 0.0$ &0 &0 &4.6495e-17 &1.1194e-16 &1.9895e-15\\
         \hline
         $t = 0.5$ &0 &1.3467e-06 &6.9803e-05 &1.0955e-03 &8.4928e-02\\
         \hline
         $t = 1.0$ &0 &1.5082e-05 &1.9777e-04 &1.3748e-03 &8.2606e-02\\
         \hline
         $t = 1.5$ &8.2473e-11 &2.9628e-05 &6.5103e-04 &2.972e-03 &1.1701e-01\\
         \hline
         $t = 2.0$ &7.1964e-11 &6.4688e-05 &1.2347e-03 &7.3922e-03 &3.2034e-01\\
         \hline
         $t = 2.5$ &5.8055e-10 &1.6176e-04 &2.9833e-03 &1.9488e-02 &1.264e\texttt{+}00\\
         \hline
    \end{tabular}
    \caption{
    Relative difference in mass density between the fine-mesh and mesh-based filter solutions at various times for the shock-bubble interaction problem.
    The min, median, mean, standard deviation, and max are reported for $\frac{\rho-\widehat{\rho}}{\rho}$, where $\widehat{\rho}$ is the density of the mesh-based filter solution.
    }
    \label{tab:BubbleDensityComparison}
\end{table}

Figure \ref{fig:BubbleTestDensity} compares the fine-mesh and filtered solutions on the $256\times 256$ mesh at $t\in\{0,0.5,1,1.5,2,2.5\}$.
This higher-resolution simulation resolves details of the bubble dynamics that are not captured in the lower resolutions.
We plot the density-gradient magnitude normalized by the density to produce a Schlieren-like visualization.  
The contour scale is logarithmic and the color map adjusted separately in each panel to emphasize the flow features.
The left half, $x(r,\theta)<0$, of individual plots is the mesh-filtering solutions, while the right half, $x(r,\theta)>0$, are the fine-mesh solutions.
Although there is no apparent visual difference between the fine-mesh and filtered solutions, Table~\ref{tab:BubbleDensityComparison} quantifies the relative difference in mass density between the two simulations.  
The median values remain below $2\times 10^{-4}$, the mean values below $3\times 10^{-3}$, and standard deviations below $2\times 10^{-2}$.  
However, these quantities increase over time.  
This increase may be due in part to the accumulation of time-integration error, since the filtered solution uses larger time steps than the fine-mesh solution.  
The largest relative differences occur along shocks and the boundary of the bubble.  
These differences are not unexpected, since a small displacement of a shock front or material boundary can produce a large pointwise relative difference, particularly where the density varies by an order of magnitude.

The benefit of achieving larger time steps with mesh-based filtering increases with resolution.
The initial fine-mesh time steps for the $64\times 64$, $128\times 128$, and $256\times 256$ meshes are $2.28\times10^{-4}$, $5.70\times10^{-5}$, and $1.43\times10^{-5}$, respectively, while the corresponding filtered time steps are $7.58\times10^{-4}$, $3.82\times10^{-4}$, and $1.92\times10^{-4}$.
The initial fine-mesh time steps decrease in size by a factor of four as the number of elements in both the $r$- and $\theta$-dimensions are doubled, while the initial filtered time steps decrease in size by a factor of two.
The filtered time steps decreasing by a factor of two, as opposed to a factor of four, indicates that the filtered time step is governed primarily by the radial resolution.
These merging time steps are approximately three, seven, and thirteen times larger than their fine-mesh counterparts.  
Consequently, the $256\times 256$ filtered simulation requires about $1.50\times10^{4}$ time steps, compared with $1.33\times10^{5}$ for the corresponding fine-mesh simulation.  

\begin{figure}
    \centering
    \begin{subfigure}[b]{0.49\linewidth}
        \centering
        \includegraphics[width=\linewidth,trim=10 50 30 60,clip]{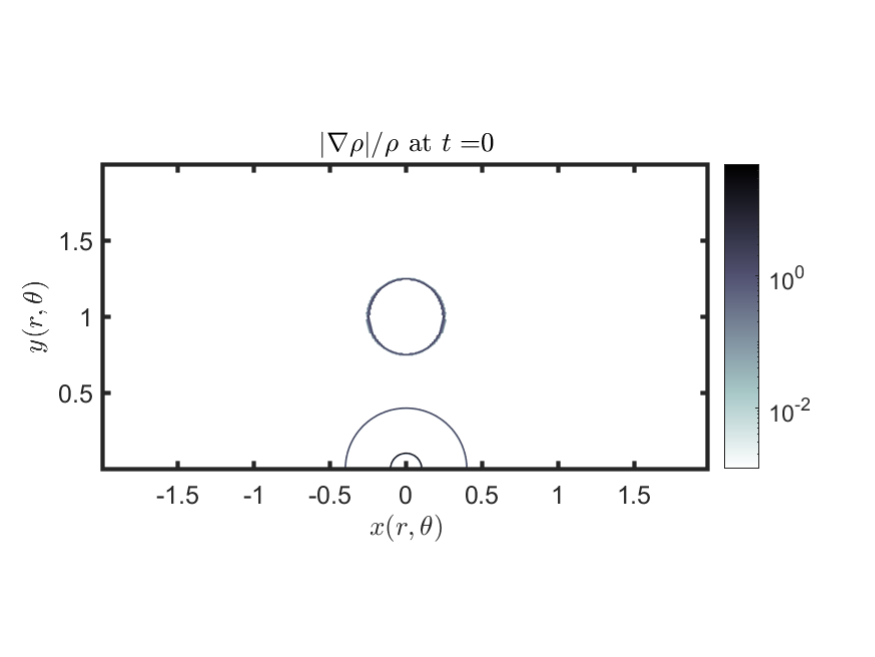}        
    \end{subfigure}
    \begin{subfigure}[b]{0.49\linewidth}
        \centering
        \includegraphics[width=\linewidth,trim=10 50 30 60,clip]{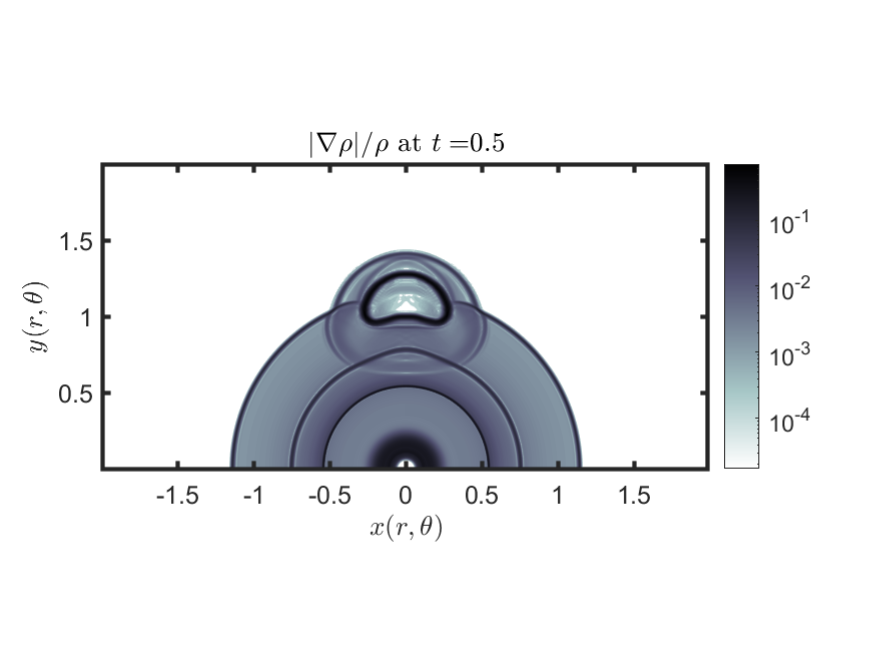}        
    \end{subfigure}
    \\
    \begin{subfigure}[b]{0.49\linewidth}
        \centering
        \includegraphics[width=\linewidth,trim=10 50 30 60,clip]{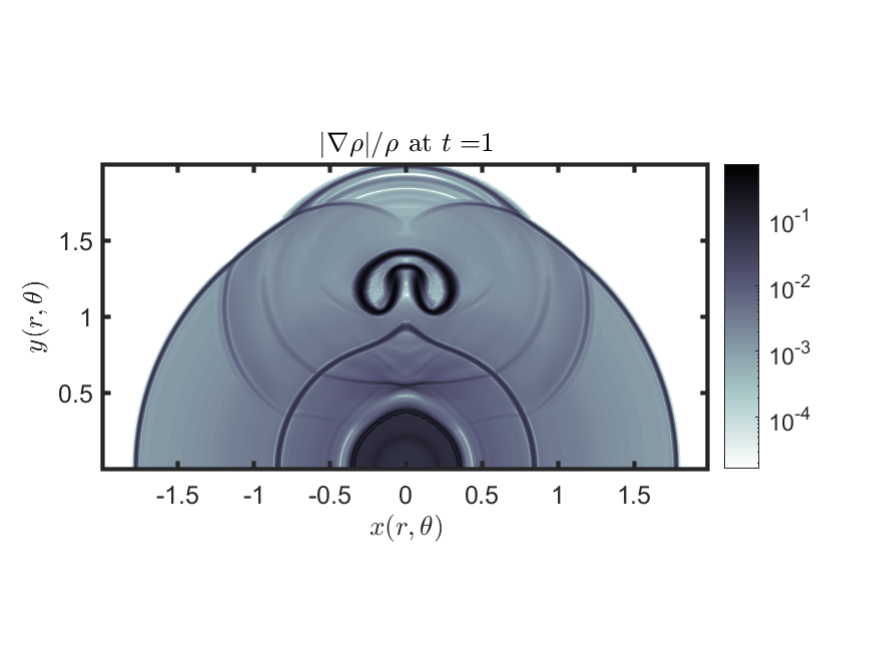}        
    \end{subfigure}
    \begin{subfigure}[b]{0.49\linewidth}
        \centering
        \includegraphics[width=\linewidth,trim=10 50 30 60,clip]{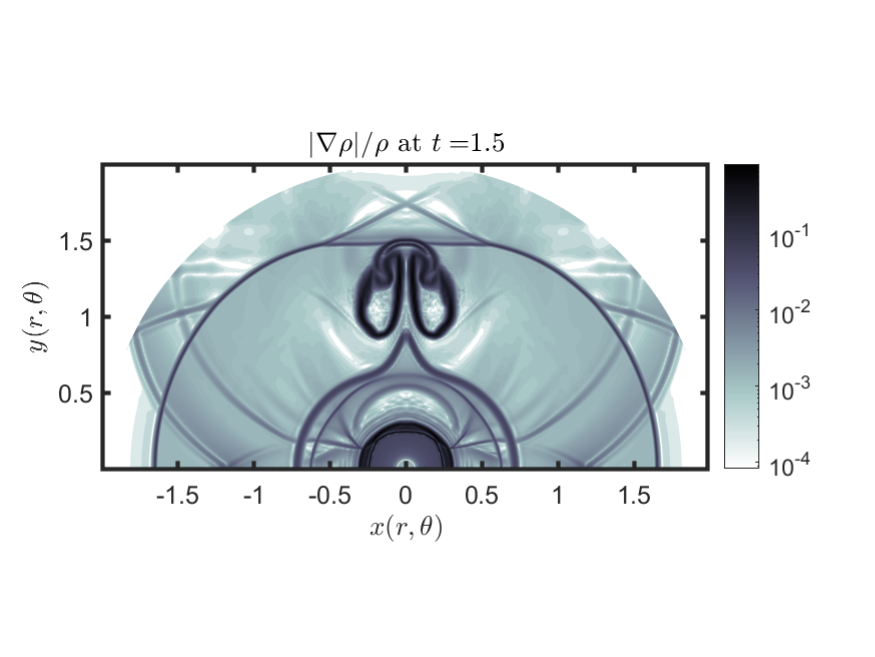}        
    \end{subfigure}
    \\
    \begin{subfigure}[b]{0.49\linewidth}
        \centering
        \includegraphics[width=\linewidth,trim=10 50 30 60,clip]{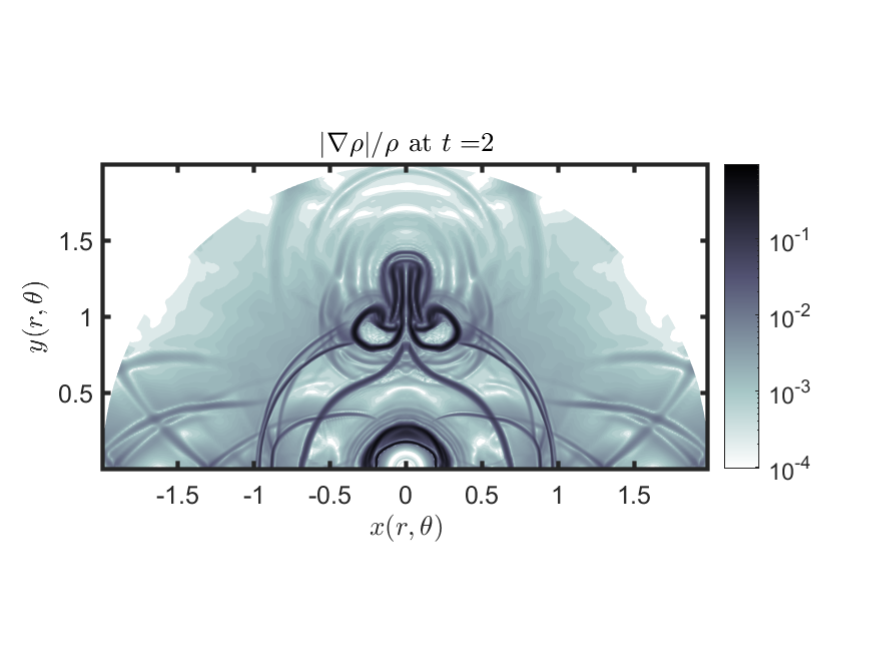}        
    \end{subfigure}
    \begin{subfigure}[b]{0.49\linewidth}
        \centering
        \includegraphics[width=\linewidth,trim=10 50 30 60,clip]{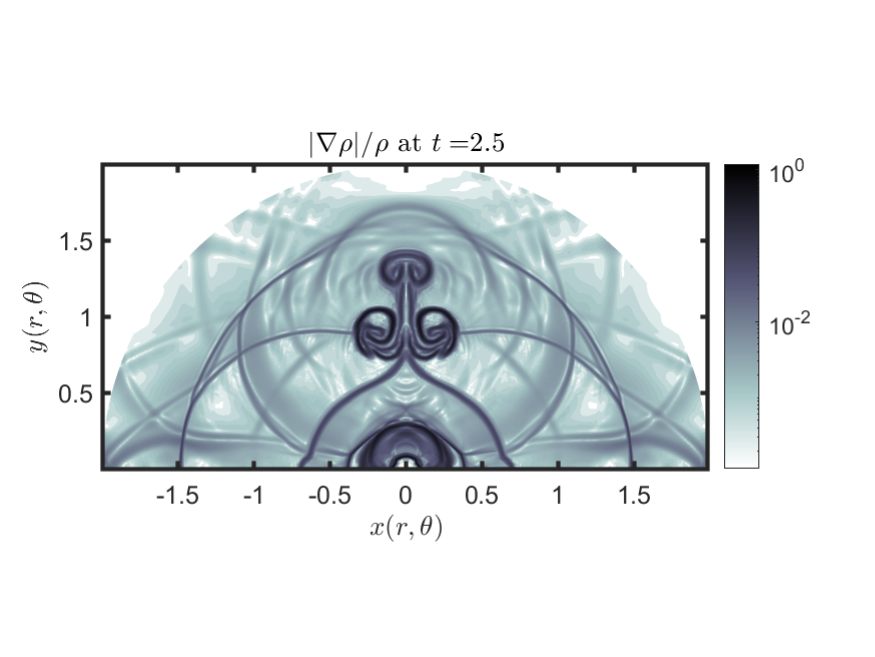}        
    \end{subfigure}
    \begin{subfigure}[b]{\linewidth}
        \centering
        \includegraphics[width=0.5\linewidth]{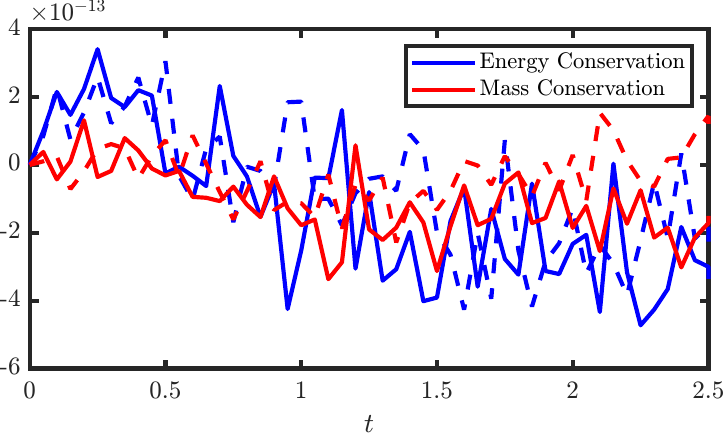}        
    \end{subfigure}
    \caption{Time series Schlieren plots for the shock--bubble interaction problem on a $256\times 256$ mesh. 
    The left half of each panel shows the result with mesh-based filtering, while the right half is the fine mesh result.  
    The plotted quantity is $|\nabla\rho|/\rho$, which provides a visualization of the shocks and wave features that develop in the solution.
    The bottom panel plots the relative change in total mass (red) and energy (blue) versus time for the shock--bubble interaction test.
    Solid lines are for fine mesh solution, while dashed lines are for mesh-based filter solution.}
    \label{fig:BubbleTestDensity}
\end{figure}

The mesh-based filter preserves the conservation properties of the RKDG method.  
The bottom panel of Figure~\ref{fig:BubbleTestDensity} compares the relative changes in mass and energy for the filtered and fine-mesh solutions on the $256\times 256$ mesh for the shock--bubble interaction test.  
For both solutions, the relative changes remain at $\cO(10^{-13})$.  
Although we present conservation results only for this test, we observe similar behavior in all tests in this section.  

% \begin{figure}
%     \centering
%     \includegraphics[width=0.5\linewidth]{Bubble_MassConservation_256x256.pdf}
%     \caption{Relative change in total mass (red) and energy (blue) versus time for the shock--bubble interaction test.
%     Solid lines are for fine mesh solution, while dashed lines are for mesh-based filter solution.}
%     \label{fig:BubbleConservation}
% \end{figure}

\subsection{Three-Dimensional Sedov--Taylor Blast Wave}

Our final three-dimensional test is the Sedov--Taylor blast wave, which models the instantaneous release of a large amount of energy within a small volume, resulting in a strong shock wave with a well-known self-similar solution \citep[e.g.,][]{landauLifshitz_1987}.  
The spherically symmetric initial conditions are given by
\begin{align}
    \rho(r) &= 1, \qquad
    v_r = 0, \qquad
    p(r) =
    \begin{cases}
        0.4\times\frac{3E_0}{4\pi R^3_0} \quad &r<R_0,\\
        0.4\times 10^{-5} \quad &r\geq R_0.
    \end{cases}
\end{align}
Here $E_0$ is the amount of energy released and $R_0$ is the radius of the sphere in which it is released.
We set $E_0 = 1$ and choose $R_0$ such that the energy is released within the first radial cell.  
We extend this initial condition to three dimensions and perform simulation on several meshes using linear elements and the SSPRK3 time integrator.  
The computational domain covers one octant of the sphere.  
We impose reflecting boundary conditions at the inner $r$ and $\theta$ boundaries, zero-gradient boundary conditions at the outer $r$ and $\theta$ boundaries, and periodic boundary conditions in $\phi$.  

The top row of Figure~\ref{fig:Sedov3DDensityEnergy} shows that the three-dimensional filtered solutions capture the blast-wave dynamics when compared to the corresponding one-dimensional reference solution and remain spherically symmetric.
We have investigated how well spherical symmetry is maintained in these simulations by computing the standard deviation of $\rho$ in the angular dimensions for each radial node.  
We computed these values at time intervals of $0.05$.  
For all the filtered solutions the standard deviation remains below $10^{-12}$.  
The exception is the $64\times 16\times 16$ filtered solution at $t=1$, where the standard deviation remains below $10^{-6}$.
However, this enhanced standard deviation occurs between $r=0.7$ and $r=1$.  
Outside this region, the standard deviation is below $10^{-12}$.  
We note these values are similar and often times smaller than the respective values for the fine-mesh solutions.

The principal benefit of the filter is the increase in the allowable time step, which is greater in three dimensions, as shown in the bottom row of Figure~\ref{fig:Sedov3DDensityEnergy}.  
For the $64\times N_\theta\times N_\phi$ meshes, the filtered time steps are essentially independent of $N_\theta$ and $N_\phi$, whereas the fine-mesh time steps decrease with increasing angular resolution.  
The filtered time step is approximately 25 times larger than the fine-mesh time step on the $64\times 4\times 4$ mesh and 100 times larger on the $64\times 8\times 8$ mesh.  
Extrapolating this scaling to the $64\times 16\times 16$ mesh suggests an increase of approximately a factor of 400.  
We did not perform the corresponding fine-mesh simulation because of the small allowable time step.  

\begin{figure}
    \centering
    \begin{subfigure}[b]{0.49\linewidth}
        \centering
        \includegraphics[width=\textwidth]{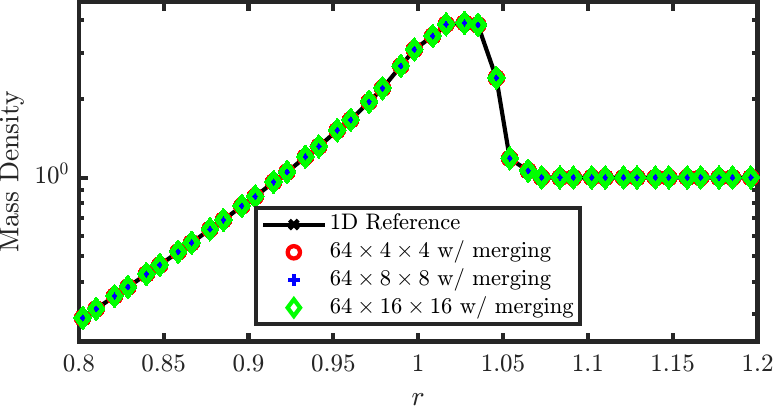}        
    \end{subfigure}
    \hfill
    \begin{subfigure}[b]{0.49\linewidth}
        \centering
        \includegraphics[width=\textwidth]{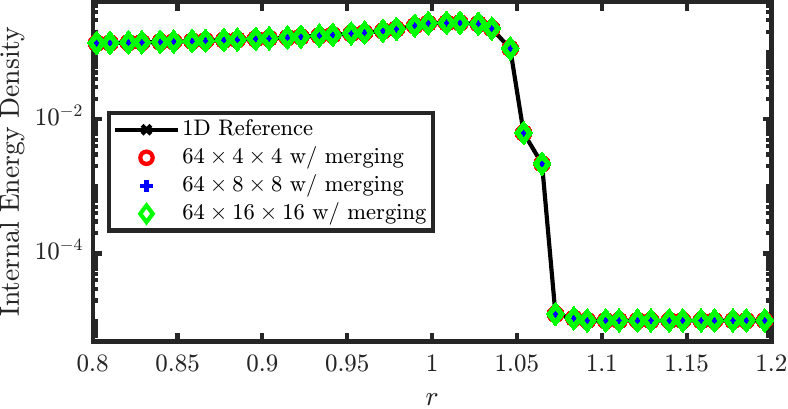}        
    \end{subfigure}
    \begin{subfigure}[b]{0.49\linewidth}
        \centering
        \includegraphics[width=\textwidth]{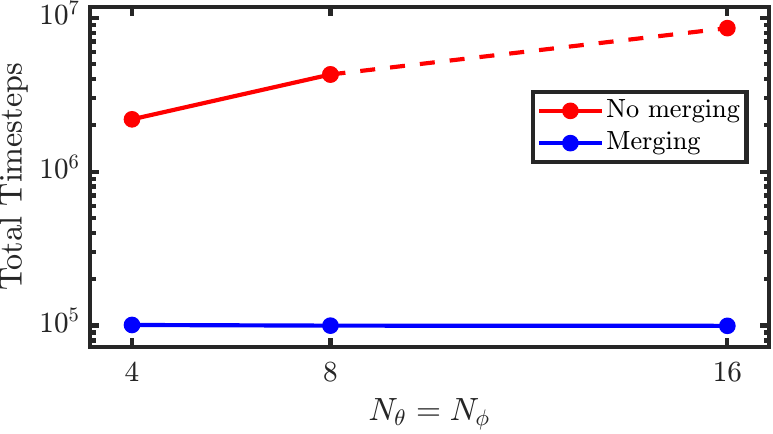}        
    \end{subfigure}
    \hfill
    \begin{subfigure}[b]{0.49\linewidth}
        \centering
        \includegraphics[width=\textwidth]{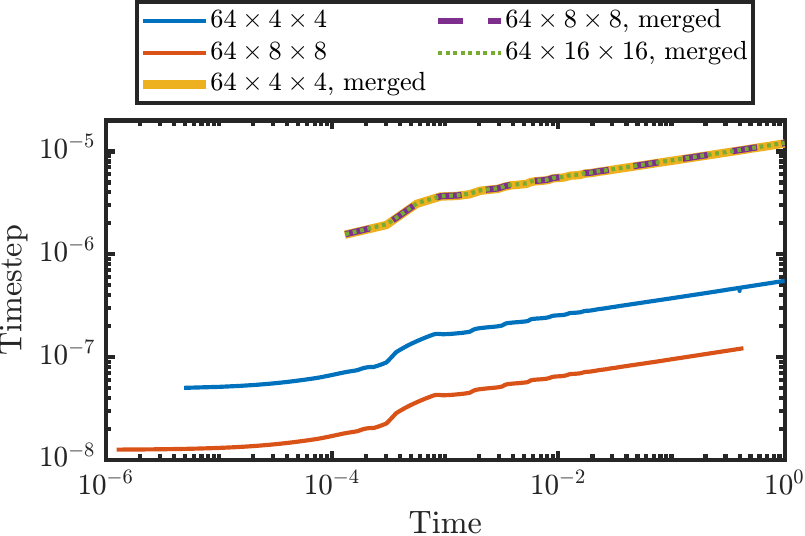}        
    \end{subfigure}
    \caption{Top row: radial profiles of the mass density (left) and internal energy density (right) for the 3D Sedov--Taylor blast wave at time $t=1$.
    The radial extent of the plots is restricted to a region around the wavefront.
    The solid black line is a 1D reference solution with $N_{r}=64$.  
    The red, blue, and green markers represent the mesh-based filtering results obtained on the $64\times 4\times 4$, $64\times 8\times 8$, and $64\times 16\times 16$ meshes, respectively.
    Bottom row: the total number of time steps (left) and the time step versus time (right) for the different $64\times N_\theta\times N_\phi$ mesh sizes used for the 3D Sedov--Taylor blast wave.
    In these tests, $N_\theta = N_\phi$.
    The red dashed line in the left panel is an estimate of the total number of time steps for a $64\times 16\times 16$ fine-mesh simulation.
    }
    \label{fig:Sedov3DDensityEnergy}
\end{figure}

\section{Summary, Conclusions, and Outlook}
\label{sec:Conclusion}

We have proposed a mesh-based filtering strategy for explicit discontinuous Galerkin methods that mitigates the severe geometric CFL restrictions induced by converging coordinate lines in curvilinear coordinate systems such as spherical-polar coordinates.  
The key idea is to construct a \emph{merged mesh} from an underlying structured \emph{fine mesh} by merging neighboring fine-mesh elements, thereby reducing extreme cell anisotropies and yielding a significantly larger stable time step.  
The merged mesh is constructed once at initialization.  
During time evolution, the DG solution is projected onto the merged mesh and prolonged back to the fine mesh.  
This mesh-based filter, which can be incorporated into existing structured-mesh DG solvers, enables time integration with the larger stable time step associated with the merged mesh.

In the one-dimensional setting, we have proven that applying the mesh-based filter is equivalent to evolving the RKDG method directly on the merged mesh and verified this result using numerical examples for the linear transport and nonlinear Burgers' equations.  
Consequently, the filtered method inherits the stability properties of the RKDG method on the merged mesh.  
We have developed merging strategies for spherical-polar meshes in two and three spatial dimensions, implemented them in the DG-based toolkit for high-order neutrino radiation hydrodynamics (\texttt{thornado}), and demonstrated the effectiveness of mesh-based filtering for the Euler equations using a Riemann problem with induced angular dependence, a shock--bubble interaction problem, and the Sedov--Taylor blast wave problem.  
With a fixed number of radial elements, mesh-based filtering permits refinement in the angular dimensions without reducing the stable time step, enabling significantly faster high-angular-resolution simulations.  

The larger stable time steps enabled by the mesh-based filtering come at the cost of some local loss of resolution in regions where elements are merged.  
As the angular resolution of the fine mesh increases, merged elements in the filtering region represent increasingly larger collections of fine-mesh elements.  
Nevertheless, our numerical experiments suggest that the resulting loss of accuracy is modest and is outweighed by the computational savings afforded by the larger time step.  
For problems without thin boundary layers, we believe that constructing merged meshes with approximately uniform proper-distance aspect ratios provides a reasonable balance between accuracy and efficiency.  
Applications requiring enhanced angular resolution can be accommodated through less aggressive merging, albeit with a corresponding reduction in the stable time step.  
More generally, the mesh-based filtering strategy is intended to alleviate geometric anisotropies introduced by the coordinate system; when large anisotropies are required by the underlying physics, alternative approaches such as implicit time integration may be more appropriate.  

The present work establishes a foundation for several future developments of mesh-based filtering for DG methods.  
An important next step is to investigate the scalability of the approach in distributed-memory environments, where mesh merging may introduce load imbalance, increased communication, and constraints on parallel domain decomposition \citep[see, e.g.,][]{ji2023,cardall_etal_2026}.  
Another possible direction is the development of physical-constraint-preserving formulations that maintain positivity and other admissibility properties in curvilinear coordinates with strong geometric anisotropies, particularly in combination with mesh-based filtering \citep[e.g.,][]{zhangShu_2011,wu2017,WX2021}.
Finally, it would be interesting to extend the approach to implicit-explicit (IMEX) time integrators \cite[e.g.,][]{ascher_etal_1997,pareschiRusso_2005}, where the explicit stages benefit from the larger stable time step provided by the merged mesh, while computationally intensive local implicit solves could be performed on the merged mesh to reduce cost, e.g., in the context of neutrino kinetics in core-collapse supernova simulations \citep{mezzacappa_etal_2020}.

% ========================

% End of main body of text

% ========================

\begin{acknowledgments}
E.~Endeve acknowledges support from the National Science Foundation's (NSF's) Gravitational Physics program under Grant No.~2409148, NSF's Computational Mathematics program under Grant No.~2309591, and NSF's Cyberinfrastructure for Sustained Scientific Innovation program under Grant No.~2513245. 
Y.~Xing acknowledges support from the NSF's Computational Mathematics program under Grant No.~2309590.
\end{acknowledgments}

\software{
\texttt{thornado} (\href{https://github.com/endeve/thornado/tree/CellMerging}{Mesh-based filtering branch})
          }

\bibliography{references}{}
\bibliographystyle{aasjournalv7}

\end{document}